\newtheorem{theorem}{Theorem}[section]
\newtheorem{lemma}[theorem]{Lemma}
\newtheorem{corollary}[theorem]{Corollary}
\theoremstyle{definition}
\newtheorem{definition}[theorem]{Definition}
\theoremstyle{remark}
\theoremstyle{question}
\newtheorem{question}[theorem]{Question}
\numberwithin{equation}{section}
\def\max{\mathop{\rm max}\nolimits}
\newcommand{\N}{{\mathbb N}}
\newcommand{\F}{{\mathcal F}}
\newcommand{\barr}[1]{\mathcal{#1}}
\newcommand{\sucb}[2]{(#1_s)_{s \in \mathcal{#2}}}
\newcommand{\matb}[2]{(#1 _s^{i})_{s \in #2, i \in \mathbb{N}}}
\newcommand{\matbr}[2]{(#1 _s^{i})_{s \in #2, i \in \mathbb{N}}}
\newcommand{\norma}[1]{|| #1 ||}
\newcommand{\summa}[3]{\sum_{i =1}^{#1} #2_i #3_i }
\newcommand{\summas}[4]{\sum_{i=1}^{#1} #2_i #3_{#4_i} }
\newcommand{\summaa}[4]{\sum_{i =1}^{#1} #2_i #3^i_{#4_i} }
\newcommand{\plegman}[2]{Plm_{#2}(#1)}
\newcommand{\EspN}[1]{\mathcal{N}_{#1}}
\newcommand{\FIN}{FIN}
\newcommand{\asyf}[1]{\Psi_{#1}}
\newcommand{\suc}[1]{(#1_i)_{i \in \mathbb{N}}}
\newcommand{\MA}[2]{AM_{#1}(#2)}
\newcommand{\SP}[2]{SM_{#1}(#2)}
\newcommand{\norm}[1]{||#1 ||}
\newcommand{\summaaa}[5]{\sum_{i=1}^{#1} #2_i #3^{{#5}_i}_{#4_i} }
\newcommand{\summasa}[4]{\sum_{i=1}^{#1} #2_i #3_{#4_i}}
\newcommand{\summaset}[3]{\sum_{i=1}^{#1} #2_i #3_i }
\newcommand{\summaaset}[4]{\sum_{i=1}^{#1} #2_i #3^i_{#4_i} }
\newcommand{\res}[2]{#1 \upharpoonright_{#2}}
\newcommand{\sobre}[2]{#1 / \{ #2\}}
\date{}
\title[Asymptotic models]{Asymptotic models via plegma families}
\author{E. A. Calderon-Garcia and S. Garcia-Ferreira}
\address{Centro de Ciencias  Matem\'aticas \\
Universidad Nacional Aut\'onoma de M\'exico
     Apartado Postal 61-3 \\
    Xangari, 58089 \\
      Morelia, Michoac\'an \\
    M\'exico}
\email{ngslivers@gmail.com, sgarcia@matmor.unam.mx}
\thanks{Research supported by CONACYT grant no. 176202 and PAPIIT grants no. IN-101911.}
\subjclass{Primary 22A10, 54H11.  Secondary 22B05.}
\keywords{asymptotic model, plegma family, barrier, spreading barrier}
\date{}
\begin{document}
\maketitle

\begin{abstract}
In the article \cite[Th. 68]{akt2}, it was shown that there exists a Banach space $X$ with Schauder basis $(e_i)_{i=1}^{\infty}$ which does not admit  $\ell_p$ as the model space obtained by a finite chain of sequences such that each element is a spreading model of a block subsequence of the previous element, starting from a block subsequence of $(e_i)_{i=1}^{\infty}$. We prove that $X$ has the stronger property of not admitting $\ell_p$ via a finite chain consisting of block asymptotic  models. This is related to a question posed by L. Halbeisen and  E. Odell in  \cite{asym} for the special case of block generated asymptotic models. Also, we show that for  every $k \in \N$ the Ramsey Coloring Theorem for $[\N]^{k}$  is equivalent to the following $k$-oscillation stability:

  In an arbitrary  Banach space $X$, for every $\epsilon > 0$ and for every normalized sequence $(e_i)_{i \in \N}$ in $X$ there exists  $M \in [\N]^{\infty}$ such that if $n_1,n_2,\cdots,n_k,m_1,m_2,\cdots,m_k \in M$, then
  $$
  \big| ||\sum_{i =1}^{k}a_i e_{n_i} || - ||\sum_{i=1}^{k} a_i e_{m_i} || \big| < \epsilon
  $$
   for all $(a_i)_{i=1}^{k} \in [-1,1]^k$.

\end{abstract}

\section{Preliminaries and Introduction}

In \cite{asym}, the authors used  Milliken's Theorem to introduce asymptotic models with a strong form of convergence with respect to a sequence of seminormalized sequences which satisfy some additional properties. On the other hand, in  the article \cite{akt}, a different type of convergence, related to the process of obtaining a spreading model of a block subsequence of another spreading model, is introduced and analyzed. In this paper, we  follow these ideas in the context of asymptotic models, by introducing higher order asymptotic models, and extend some of the known results from the higher order spreading models.
But, we shall use Ramsey Theorem rather than  Milliken's Theorem  to introduce a kind of higher order asymptotic model. In the next  two paragraphs, we shall include some notation and terminology  that we need throughout the paper.

\medskip

The symbol $FIN$ will denote the family of all finite subsets of $\mathbb{N}$. If $N$ is an infinite subset of $\mathbb{N}$, then we denote by $[N]^{\infty}$ the set of all infinite subsets of
$N$. To specify the elements of a $s \in FIN$  we shall write $s = \{s(1),\cdots, s(|s|)\}$ and in this notation  we shall always assume that $s(1) < s(2) <\cdots.< s(|s|)$. If $s, t \in FIN$, we say
that $s < t$ if $\max\{s\} < \min\{t\}$ (that is, $s(|s|) < t(1)$). In the case when $s = \{n\}$ we simply write $n < t$. If $s, t \in FIN$, then $s \sqsubseteq t$ means that $s$ is an initial
segment of $t$ and when $s < t$ we denote $s^{\frown}t = s \cup t$. If $s \in FIN$ and $A \in [\N]^\infty$, $s\sqsubseteq A$ also means that $s$ is an initial segment of $A$. If $A \in [\N]^\infty$ and $n \in \N$, then $A/n = \{ m \in A : n < m
\}$.  We denote for $\mathcal{F} \subset \FIN$, $\mathcal{F} / k = \{ s \in \mathcal{F} : \min(s) > k\}$. If $\F \subseteq FIN$ and $n \in \N$, then $\F_{\{n\}} = \{ s \in FIN : n < s \ \text{and} \ \{n\}^{\frown} s \in \mathcal{F}\}$. If $N
\in  [\N]^\infty$, then $\{n_1, n_2,\cdots.\}$ will stand for the increasing enumeration of $N$.

\medskip

Our Banach spaces will be separable, infinite dimensional and real. The sphere of a Banach space $X$ will be denoted by $S(X)$. The dual space of a Banach space will be denoted  by $X^*$. In case that several Banach spaces are involved and we want to specify the norm of a Banach space $X$ we shall write $\| \cdot \|_X$. We say that $(e_n)_{n \in \mathbb{N}}$ is a {\it Schauder basis} of $X$ if for each $x \in X$ there is a unique sequence of real numbers $(a_n)_{n \in \mathbb{N}}$ such that $ x = \sum_{n =1}^\infty a_ne_n$ and say that it is a {\it basic sequence} if it is a Schauder basis of the space they generate.  All the Banach spaces that we shall consider in this paper will have a Schauder basis. Thus, for our convenience,  a Banach spaces $X$ will be identified with a Schauder basis $(e_n)_{n \in \mathbb{N}}$ of it. If $(e_n)_{n \in \mathbb{N}}$ is a Schauder basis for $X$, then $e_i^*(\sum_{n = 1}^\infty a_ne_n) = a_i$ is a functional on $X$. Thus $\{(e_n,e_n^*)\}_{n \in \mathbb{N}}$ is a biorthogonal system. Let $X$ be a Banach space and let $(x_n)_{n \in \mathbb{N}}$ be a sequence in $X$.  The sequence $(x_n)_{n \in \mathbb{N}}$ is called {\it normalized} if $||x_n|| = 1$ for all $n\in \mathbb{N}$. We say that $(x_n)_{n \in \mathbb{N}}$ is $C$-basic if $\|\sum_{i = 1}^n a_ix_i\| \leq C\|\sum_{i = 1}^m a_ix_i\|$ for all $n < m$ and $(a_i)_{i = 1}^m \subseteq \mathbb{R}$. The minimum number $C$ with this property is called the {\it basic constant} of $(x_n)_{n \in \mathbb{N}}$ and will be denoted  by $bc(x_n)_{n \in \mathbb{N}} $.

\medskip

In the literature, one of the main problems regarding the notion of asymptotic structure  is to see if certain desirable spaces  can be obtained through the application of processes inside of a given Banach space. An important example about this is the celebrated Theorem of Krivine \cite{k} concerning the  finite representability of some $\ell_p$, with  $p \in (1, \infty]$, in any Banach space.
It is known (\cite{odell}) that there is a Banach space $X$ such that no $\ell_p$ can be generated as a  spreading model by any of its weakly null sequences.
In a recent paper by S. A. Argyros, V. Kanellopoulos and K. Tyros, \cite{akt2}, this last result was extended so that not even a finite chain of block generated spreading models starting from this space $X$ can generate an $\ell_p$. This question remained open for the case of asymptotic models introduced in \cite[P. 6.5]{asym}, that is:

\medskip

\begin{question}\label{6.5}
For any space X, does there exist a finite chain of asymptotic
models $X = X_0, X_1, \cdots,X_n$, so that $X_{i+1}$ is an asymptotic
model of $X_i$, for $i = 2,\cdots, n$, and $X_n$ is isomorphic to $c_0$ or $\ell_p$ for some
$1 \leq p < \infty $ ?
\end{question}

This question motives this paper. In fact,  we give a negative solution to Question \ref{6.5}  when the asymptotic
models are block generated or weakly generated.

\medskip

For the reader convenience and to explain the new concepts that we shall need, we recall several notions from the literature:

\medskip

$\bullet$ \ \ {\it Spreading model}: Let $(x_n)_{n \in \N}$ be a normalized basic sequence of a Banach space $X$. A Banach space $E$ with a Schauder basis $(e_n)_{n \in \mathbb{N}}$ is called a
{\it spreading model} of $(x_n)_{n \in \N}$ if there is  $\epsilon_n \searrow 0$ such that for every $s = \{s(1),\cdots., s(n)\} \in FIN$ with $s(1) \geq |s| = n$  we have that
$$
\big|\|\sum_{j = 1}^na_jx_{{s(j)}} \|_X -\|\sum_{j = 1}^na_je_j \|_E \big| < \epsilon_{s(1)}.
$$
for every  $(a_i)_{i =1}^ n \in [-1,1]^n$.
In this case, we say that $(x_n)_{n \in \mathbb{N}}$ generates  $(e_n)_{n \in \mathbb{N}}$ (or $E$) as a {\it spreading model}.

\medskip

A. Brunel and L. Sucheston \cite{bs} showed that every normalized basic sequence of a Banach space has a subsequence that generates a spreading model. The proof of this result
involves Ramsey Theorem. Indeed, in \cite{Shch}, the reader can find a beautiful proof of this fact using the theorem known as Ramsey's Theorem for Analysis (see
below Theorem \ref{ramseyanalistas}). We shall show, in the next section, that the Brunel-Sucheston Theorem is in fact equivalent to Ramsey's Theorem (see Theorem \ref{equiva}).

\medskip

The next definition of $K$-basic array from the paper \cite{asym} is used to generalize the notion of spreading model:

$\bullet$ \ \ {\it Basic matrix}:  A matrix $(x_m^n)_{n, m \in \mathbb{N}}$ of elements of $X$ is called a {\it basic matrix}
if $(x_m^n)_{m \in \mathbb{N}}$ is a normalized basic sequence in $X$ for each $n  \in \mathbb{N}$.

$\bullet$ \ \ {\it Asymptotic model}:   Let $(x_m^n)_{n, m \in \mathbb{N}}$  be a basic matrix of a Banach space $X$. We say that a Banach space $E$ with a normalized Schauder basis
 $(e_n)_{n \in \mathbb{N}}$ is an {\it asymptotic model} of $(x_m^n)_{n, m \in \mathbb{N}}$
 if for each $\epsilon_n \searrow 0$ there is a sequence $(k_i)_{i \in \mathbb{N}}$ in $\mathbb{N}$ such that for every $s = \{s(1),\cdots, s(n)\} \in FIN$ with $s(1) \geq |s| = n$   we have that
$$
\big|\|\sum_{j= 1}^n a_jx^j_{k_{s(j)}}\|_X -\|\sum_{j= 1}^n a_je_{j}\|_E \big| < \epsilon_{s(1)},
$$
for every $(a_i)_{i = 1}^n \in [-1,1]^{n}$.
When this happens we say that $(x_m^n)_{n, m \in \mathbb{N}}$ generates  $(e_n)_{n \in \mathbb{N}}$ (or $X$) as an {\it asymptotic model}.

It is known  in  that  if $(x_m^n)_{n, m \in \mathbb{N}}$ is a  basic matrix of a Banach space $X$ and $\epsilon_n \searrow 0$, then there is a sequence $(k_n)_{n \in \mathbb{N}}$ in
$\mathbb{N}$ such that
 for every $s, t \in FIN$ with $s(1) \leq |s| = n = |t| \geq t(1)$ we have that
$$
\big|\|\sum_{j = 1 }^n a_jx^j_{k_{s(j)}}\| -\|\sum_{j = 1}^n a_jx^j_{k_{t(j)}}\| \big| < \epsilon_{\min\{s(1),t(1)\}},
$$
for every $(a_i)_{i = 1}^n \in [-1,1]^{n}$.

\medskip

In the first section, we prove several results concerning uniform barriers that will be applied in further sections.  We  prove, in the second section, that Ramsey's Theorem  is equivalent to some kind of oscillation properties in Banach spaces. The third section is devoted to recalling the definition and some properties of plegma families which were introduced in \cite{akt}. Finally, in the last section we answer a problem related to Question \ref{6.5}.

\section{Uniform Barriers}
The Nash-William's Theory of Fronts and Barriers has been very important in the study of asymptotic models. Next we list some standard terminology of this theory that can be found in  \cite{ss}.

\medskip

Given an $\mathcal{F} \subseteq FIN$ and an infinite set $M \subseteq \mathbb{N}$,  $\mathcal{F} \upharpoonright_M$ denotes the set $ \{s \in \mathcal{F} : s \subseteq M \}$.

 \begin{definition} Let $\barr{B},\mathcal{C} \subseteq \FIN$.
\begin{itemize}
\item $\barr{B}$ is called {\it thin} if $s \not \sqsubseteq t$ for distinct $s, t \in \barr{B}$.

\item  $\barr{B}$ is called a {\it barrier} if:
 \begin{itemize}
 \item For every $M \subseteq \mathbb{N}$ there is an $s \in \barr{B}$ such that $s\sqsubseteq M$.
 \item For every $s,t \in \barr{B}$ if $t \neq s$ then $s \not \subseteq t$ and $t \not \subseteq s$.
 \end{itemize}
   $\barr{B}$ is a spreading barrier if, in addition
 \begin{itemize}
 \item If $s \in \barr{B}$ and $ r \in \FIN$ are such that $s(i) \leq r(i)$,  for all $i \in \mathbb{N}$, then there exists $t \in \barr{B}$ such that $r \sqsubseteq t$.
 \end{itemize}

 \item  We say that $\mathcal{B}$ has the  {\it Ramsey property} if for every partition $\mathcal{B} = \mathcal{P}_0 \cup\cdots\cup \mathcal{P}_k$ and for every $N \in [\mathbb{N}]^{\infty}$
     there is an  $M \in [N]^{\infty}$ such that at most one of the sets $\mathcal{P}_0\upharpoonright_M, \cdots, \mathcal{P}_k\upharpoonright_M$  is nonempty.
 \item We define $\mathcal{B} \oplus \mathcal{C} = \{ s \cup t : s \in \mathcal{B}, t \in \mathcal{C} \ \text{and} \ s<t \}$.
  \end{itemize}
 \end{definition}

  It is not hard to see if $\barr{B},\mathcal{C} \subseteq \FIN$ are both barriers, then $\mathcal{B} \oplus \mathcal{C}$ is also a barrier.

\medskip

   Given $\mathcal{D} \subseteq (FIN)^{< \infty}$ we say it has the Ramsey property if for every partition $\mathcal{D} = \mathcal{P}_0 \cup\cdots\cup \mathcal{P}_k$ and for every $N \in [\mathbb{N}]^{\infty}$
   there is an  $M \in [N]^{\infty}$ such that at most one of the sets $\mathcal{P}_0\upharpoonright_M, \cdots, \mathcal{P}_k\upharpoonright_M$  is nonempty. Where $\mathcal{P}_k\upharpoonright_M := \{d =(d_i)_{i=1}^{l} \in \mathcal{P}_k : \forall 1 \leq i \leq l(d_i \subseteq M) \}$. Notice that if the function $\phi : \mathcal{D} \to FIN$ that takes $d = (d_i)_{i=1}^{l}$ to $\cup_{i=1}^{l}d_i $ is injective, then this definition is equivalent to its image having the Ramsey property.

  \begin{definition} Let $\mathcal{F} \subseteq \FIN$ and $\alpha < \omega_1$. We say that $\F$ is $\alpha$-{\it uniform}
if:
 \begin{itemize}
 \item $\alpha = 0$ and $\F = \{\emptyset\}$.

\item $\alpha = \beta +1$ and $\F_{\{n\}}$ is $\beta$-uniform for each $n \in \N$.

\item $\alpha$ is a limit ordinal and there is a sequence $\alpha_n\nearrow \alpha$ such that  $\F_{\{n\}}$ is $\alpha_n$-uniform for each $n \in \N$.
\end{itemize}
We say that $\F$ is uniform if it is $\alpha$-uniform for some $\alpha < \omega_1$.
\end{definition}

It was shown by Nash-Williams (\cite[Lemma II.2.7]{ss}) that every thin family is Ramsey though for the special case of uniform barriers it can shown by a simple induction.

\medskip

Given $M = \{ m_k : k \in \N \}, N=\{ n_k : k \in \N \} \in [\N]^{\infty}$, we denote by $T_{M,N}$ the function  $m_k \to n_k$, and if $s =\{m_{k_1}, \cdots, m_{k_l} \} \in [M]^{<\infty}$, then we will write $T_{N,M}(s) = \{n_{k_1}, \cdots,  n_{k_l}\}$. Observe that given an uniform barrier $\mathcal{F}$ on $M$, $T_{M,N}(\mathcal{F})$ is a uniform barrier on $N$ with the same uniformity as $\mathcal{F}$. We omit the proof of the following easy lemma.

\begin{lemma} For $M,N \in [\mathbb{N}]^{\infty}$ and $k \in \N$, we have that
$$ T_{\sobre{M}{m_k}, \sobre{N}{n_k}} (s) = T_{M,N}(s), $$
 for each $s \in [M]^{< \infty}$ with $s > m_k$.
\end{lemma}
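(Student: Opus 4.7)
The plan is a direct verification by unwinding the definitions, so the work is really just careful indexing. I would start from the enumerations $M = \{m_1 < m_2 < \cdots\}$ and $N = \{n_1 < n_2 < \cdots\}$ and, on the left-hand side, identify the elements of $\sobre{M}{m_k}$ and $\sobre{N}{n_k}$ with their positions in these enumerations; then I would apply $T_{M,N}$ on the right-hand side and compare pointwise.

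First I would fix an arbitrary $s = \{m_{j_1}, \ldots, m_{j_\ell}\} \in [M]^{<\infty}$ satisfying $s > m_k$. Since the $m_i$'s are strictly increasing, the hypothesis $s > m_k$ is equivalent to $j_i > k$ for all $i \leq \ell$. Directly from the definition of $T_{M,N}$ as the map $m_j \mapsto n_j$, the image is
$$T_{M,N}(s) = \{n_{j_1}, \ldots, n_{j_\ell}\}.$$

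Next I would expand $\sobre{M}{m_k} = M/\{m_k\} = \{m \in M : m > m_k\} = \{m_{k+1}, m_{k+2}, \ldots\}$, so its $i$-th element (in the increasing enumeration) is $m_{k+i}$, and symmetrically the $i$-th element of $\sobre{N}{n_k}$ is $n_{k+i}$. Therefore $T_{\sobre{M}{m_k}, \sobre{N}{n_k}}$ acts by $m_{k+i} \mapsto n_{k+i}$. Since each $m_{j_i}$ with $j_i > k$ is the $(j_i-k)$-th element of $\sobre{M}{m_k}$, it is sent to $n_{k+(j_i-k)} = n_{j_i}$, and hence $T_{\sobre{M}{m_k}, \sobre{N}{n_k}}(s) = \{n_{j_1}, \ldots, n_{j_\ell}\}$, matching the previous expression.

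There is essentially no obstacle: the assertion is a tautology expressing that deleting the common initial segment of length $k$ from both $M$ and $N$ does not change how the remaining terms are paired up. The only thing to be careful about is that the hypothesis $s > m_k$ is exactly what guarantees $s \subseteq \sobre{M}{m_k}$, so that $T_{\sobre{M}{m_k}, \sobre{N}{n_k}}(s)$ is even defined; this is why the authors state that condition explicitly and then omit the proof.
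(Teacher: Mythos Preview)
Your proof is correct; the paper in fact omits the proof entirely, calling the lemma ``easy,'' and your direct index-chasing verification is exactly the routine argument the authors had in mind.
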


\begin{theorem}
\label{2.4}
Let $\barr{F}, \barr{G} \subseteq FIN$ be uniform barriers such that $\barr{G}$ is spreading and $o(\barr{F}) \leq o(\barr{G})$. Then for every $M,N \in [\mathbb{N}]^{\infty}$ there exists an $L_0 \in [N]^{\infty}$ such that
$$\forall L \in [L_0]^{\infty}  (T_{M,L}(\barr{F}\upharpoonright_M) \subseteq \overline{\barr{G}\upharpoonright_L}^{\sqsubseteq} ).$$
\end{theorem}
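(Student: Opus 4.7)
The plan is to reduce the statement to a claim about a single set $L_0$ using the spreading hypothesis on $\mathcal{G}$, and then prove the reduced claim by transfinite induction on $o(\mathcal{F})$. Write $\mathcal{G}_0 := \overline{\mathcal{G}}^\sqsubseteq = \{r \in FIN : r \sqsubseteq t \text{ for some } t \in \mathcal{G}\}$. The reduction asserts that it suffices to produce $L_0 \in [N]^\infty$ with $T_{M,L_0}(s) \in \mathcal{G}_0$ for every $s \in \mathcal{F}\upharpoonright_M$. Three facts chain together to give this reduction: (i) since $\mathcal{G}$ is spreading, $\mathcal{G}_0$ is closed under coordinate-wise increases at fixed length---given $r \sqsubseteq s \in \mathcal{G}$ and $r'$ with $|r'| = |r|$ and $r'(i) \geq r(i)$, pad $r'$ to some $s'$ of length $|s|$ with $s'(i) \geq s(i)$ and apply the spreading axiom; (ii) for any $L \in [L_0]^\infty$ and $s \in \mathcal{F}\upharpoonright_M$, $T_{M,L}(s)$ has the same length as $T_{M,L_0}(s)$ and dominates it coordinate-wise, since the $k$-th element of $L$ is at least the $k$-th element of $L_0$; (iii) if $r \in \mathcal{G}_0$ and $r \subseteq L$, then $r \in \overline{\mathcal{G}\upharpoonright_L}^\sqsubseteq$, because applying the barrier property of $\mathcal{G}$ to the infinite set $r \cup (L/\max r) \subseteq L$ yields $t \in \mathcal{G}$ with $t \sqsubseteq r \cup (L/\max r)$, and the $\subseteq$-antichain property of $\mathcal{G}$ combined with $r \sqsubseteq s$ for some $s \in \mathcal{G}$ excludes $t \sqsubsetneq r$, leaving $r \sqsubseteq t$ with $t \subseteq L$.

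For the induction on $\alpha = o(\mathcal{F})$, the base $\alpha = 0$ is trivial since $\mathcal{F} = \{\emptyset\}$. In the successor case $\alpha = \beta + 1$, each $\mathcal{F}_{\{m\}}$ is $\beta$-uniform; a direct check shows each $\mathcal{G}_{\{n\}}$ is again a spreading barrier (on $\mathbb{N}/n$), and $o(\mathcal{G}_{\{n\}}) \geq \beta$ holds either for all $n$ (when $o(\mathcal{G})$ is a successor) or for all sufficiently large $n$ (limit case). Enumerating $M = \{m_1 < m_2 < \cdots\}$, I would construct $L_0 = \{\ell_1 < \ell_2 < \cdots\}$ by a diagonal recursion with nested residues $N = \Lambda_0 \supseteq \Lambda_1 \supseteq \cdots$: at stage $k$, pick $\ell_k \in \Lambda_{k-1}$ with $o(\mathcal{G}_{\{\ell_k\}}) \geq \beta$, and apply the inductive hypothesis to the pair $(\mathcal{F}_{\{m_k\}}, \mathcal{G}_{\{\ell_k\}})$ with parameters $M/m_k$ and $\Lambda_{k-1}/\ell_k$ to obtain $\Lambda_k \subseteq \Lambda_{k-1}/\ell_k$. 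Since $L_0/\ell_k \subseteq \Lambda_k$ by construction, for each $s = \{m_k\} \cup s' \in \mathcal{F}\upharpoonright_M$ the hypothesis gives $T_{M/m_k,\,L_0/\ell_k}(s') \in \overline{\mathcal{G}_{\{\ell_k\}}}^\sqsubseteq$, whence $T_{M,L_0}(s) = \{\ell_k\} \cup T_{M/m_k,\,L_0/\ell_k}(s') \in \mathcal{G}_0$. The limit case is handled identically, using that $\mathcal{F}_{\{m_k\}}$ is $\alpha_{m_k}$-uniform for a sequence $\alpha_m \nearrow \alpha$.

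The main obstacle is really Step 1 rather than the induction itself. The spreading property of $\mathcal{G}$ is used precisely to absorb the index shift between $L_0$ and an arbitrary subset $L \subseteq L_0$: without the reduction to $\mathcal{G}_0$, a direct induction confronts a mismatch, since the $k$-th element of $L$ is some $\ell_{i_k}$ with $i_k \geq k$, so the stage-$k$ hypothesis (which uses $\mathcal{F}_{\{m_k\}}$) does not address the decomposition of an $s \in \mathcal{F}\upharpoonright_M$ whose minimum becomes $\ell_{i_k}$ after transfer. Monotonicity of $\mathcal{G}_0$ under coordinate-wise increases is exactly what erases this shift.
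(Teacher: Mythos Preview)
Your proof is correct and follows essentially the same approach as the paper. Both arguments build $L_0$ by a diagonal recursion, applying the inductive hypothesis at stage $k$ to the pair $(\mathcal{F}_{\{m_k\}},\mathcal{G}_{\{\ell_k\}})$ with suitably chosen $\ell_k$, and both use the spreading hypothesis on $\mathcal{G}$ to absorb the index shift between $L_0$ and an arbitrary $L\in[L_0]^\infty$. The only organizational differences are: the paper inducts on $o(\mathcal{G})$ rather than $o(\mathcal{F})$, and it invokes spreading inside the verification (replacing $\ell_{k_1}$ by the $k_1$-th element of $L$ at the first coordinate, with the remaining shift handled by the quantified inductive hypothesis), whereas you isolate the spreading step beforehand as the reduction (i)--(iii). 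Your point (iii) also makes explicit the passage from $\overline{\mathcal{G}}^{\sqsubseteq}$ to $\overline{\mathcal{G}\upharpoonright_L}^{\sqsubseteq}$, which the paper leaves implicit.
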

\begin{proof}

The proof will be by induction on $o(\barr{G})$. The case when $o(\barr{G}) < \omega$ is straightforward. Now, let $\alpha < \omega_1$ and assume that the result holds for each barrier with uniformity $< \alpha$. Let $\barr{F},\barr{G} \subseteq FIN$ be uniform barriers with $o(\barr{F}) \leq o(\barr{G})= \alpha$. Fix $M,N \in [\mathbb{N}]^{\infty}$. Recursively, for each $k \in \N$ we shall define $L'_k \in [N]^\infty$ and $l_k \in \N$ so that
\begin{enumerate}
\item $L'_0 = N$ and $l_0 = 0$.
\item $l_{k+1} \in L'_{k}$ for every $k \in \N$.
\item For every $k \in \N$, $L'_{k+1} \in [L'_{k} / \{l_{k+1}\}]^{\infty}$ and
$$
\forall L \in [L'_{k+1}]^{\infty}  (T_{M/ \{m_k\},L}(\barr{F}_{\{m_k\}}\upharpoonright_{M/ \{m_k\}}) \subseteq \overline{\barr{G}_{\{l_k\}}\upharpoonright_L}^{\sqsubseteq} ).
$$
\end{enumerate}
To carry out the construction take $n \in \N$ and assume that $L_i$ and $l_i$ have been defined for each $i \leq n$. By definition of uniformity there must exist a $l_{n+1} \in L'_n$ such that $o(\barr{F}_{\{ m_{n+1}\}}) \leq o(\barr{G}_{\{ l_{n+1}\}}) < \alpha$. Then,  we apply the induction hypothesis to these barriers and $\sobre{M}{m_{n+1}}$ and $\sobre{L'_n}{l_{n+1}}$ to obtain $L'_{n+1} \in [N]^{\infty}$ such that the conclusion holds. Let us prove that the set $L_0 = \{l_k  : k \in \N \setminus \{0\} \}$ satisfies the conclusion of the theorem. Indeed, to see this take any $L \in [L_0]^\infty$ and notice that  $L / \{ l_{k}\} \subseteq L'_k$, for all $k \in \N$. Now take $s = \{ m_{k_1}, \cdots , m_{k_s} \} \in \barr{F}\upharpoonright_M$. Set $s' = s / \{ m_{k_1}\} \in \barr{F}_{{\{m_{k_1}\}}}$ and denote the $k_1$ element of $L$ by $l$. It is clear that $l \geq l_{k_1}$. By the previous lemma and  clause $(3)$, we have that
$$
T_{M,L}(s') = T_{\sobre{M}{m_{k_1}}, \sobre{L}{l}}(s') \in \overline{\barr{G}_{\{l_{k_1}\}}}^{\sqsubseteq}.
$$
  So, $ \{ l_{k_1}\} \cup T_{M,L}(s')  \in \overline{\barr{G}}^{\sqsubseteq}$ and since $\barr{G}$ is spreading and $l \geq l_{k_1}$ it follows that
  $$
  T_{M,L}(s) = \{ l\} \cup T_{M,L}(s') \in \overline{\barr{G}}^{\sqsubseteq}.
  $$
\end{proof}

\begin{corollary}\label{2.5}
Let $\barr{F}, \barr{G} \subseteq FIN$ be uniform barriers such that $\barr{G}$ is spreading and $o(\barr{F}) \leq o(\barr{G})$.
Let $M,N \in [\mathbb{N}]^{\infty}$ and assume that $L_0 \in [N]^{\infty}$ that satisfies the conclusion of Theorem \ref{2.4}. Then, for every $L \in [L_0]^{\infty}$
and every  $s \in \barr{G}\upharpoonright_{L}$ there is a unique element, denoted by $\psi_{L,M}(s)$, of $\barr{F}\upharpoonright_M$ satisfying $T_{M,L}(\psi_{L,M}(s)) \sqsubseteq s$.
\end{corollary}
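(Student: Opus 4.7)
The plan is to split the argument into existence and uniqueness, with the bulk of the work in existence. For existence, given $s \in \barr{G}\upharpoonright_{L}$, I would first pull it back by setting $t := T_{L,M}(s) \in [M]^{<\infty}$, and then extend $t$ to an infinite subset of $M$ by declaring $M' := t \cup (M/\max(t)) \in [M]^{\infty}$, so that $t \sqsubseteq M'$. Since $\barr{F}$ is a barrier, $\barr{F}\upharpoonright_{M}$ is a barrier on $M$, and applying it to $M'$ yields a (necessarily unique) $u \in \barr{F}\upharpoonright_{M}$ with $u \sqsubseteq M'$. Because both $u$ and $t$ are initial segments of the same infinite $M'$, they are $\sqsubseteq$-comparable, so either $u \sqsubseteq t$ or $t \sqsubsetneq u$.

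In the good case $u \sqsubseteq t$, the order-preserving bijection $T_{M,L}$ transports initial segments to initial segments, so $T_{M,L}(u) \sqsubseteq T_{M,L}(t) = s$, and I would set $\psi_{L,M}(s) := u$. The key step is to exclude the case $t \sqsubsetneq u$: in that situation, Theorem \ref{2.4} applied to $L \in [L_0]^\infty$ gives $T_{M,L}(u) \in \overline{\barr{G}\upharpoonright_L}^{\sqsubseteq}$, i.e.\ $T_{M,L}(u) \sqsubseteq v$ for some $v \in \barr{G}\upharpoonright_L$, while $t \sqsubsetneq u$ together with the order-preservation of $T_{M,L}$ forces $s = T_{M,L}(t) \sqsubsetneq T_{M,L}(u) \sqsubseteq v$, hence $s \sqsubsetneq v$ with $s, v \in \barr{G}\upharpoonright_L$. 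This contradicts the defining property of the barrier $\barr{G}$ that no two of its elements are comparable under $\subseteq$.

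For uniqueness, suppose $u, u' \in \barr{F}\upharpoonright_{M}$ both satisfy $T_{M,L}(u), T_{M,L}(u') \sqsubseteq s$. Then $T_{M,L}(u)$ and $T_{M,L}(u')$ are two initial segments of the same $s$ and are therefore $\sqsubseteq$-comparable; pulling back by $T_{L,M}$ shows $u$ and $u'$ are $\sqsubseteq$-comparable in $M$, and the barrier property of $\barr{F}$ again forces $u = u'$. The main obstacle is the overshoot alternative $t \sqsubsetneq u$ in the existence argument, and the nontrivial ingredient there is precisely the combination of Theorem \ref{2.4} with the incompatibility condition for $\barr{G}$; everything else is bookkeeping about how $T_{M,L}$ carries the $\sqsubseteq$-relation between subsets of $M$ and subsets of $L$.
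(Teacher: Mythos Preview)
Your proof is correct and follows essentially the same approach as the paper: both arguments produce a candidate from the barrier property of $\barr{F}\upharpoonright_M$ and then invoke Theorem~\ref{2.4} together with the $\subseteq$-antichain condition on $\barr{G}$ to rule out the overshoot alternative. The only cosmetic difference is that you pull $s$ back to $M$ via $T_{L,M}$ and work there, whereas the paper pushes $\barr{F}\upharpoonright_M$ forward to $L$ via $T_{M,L}$ and works in $L$; you also spell out the uniqueness step, which the paper leaves implicit.
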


\begin{proof}
Take $s \in \barr{G}\upharpoonright_{L}$. Since $T_{M,L}(\barr{F}\upharpoonright_M)$ is a barrier on $L_0$ there exists an $t \in \barr{F}\upharpoonright_M$ such that either $T_{M,L}(t) \sqsubseteq  s$ or $s \sqsubset T_{M,L}(t)$. Suppose that the second option holds, then by the conclusion of Theorem \ref{2.4} it follows that there exists $s' \in \barr{G}\upharpoonright_{L}$ such that $s \sqsubset T_{M,L}(t) \sqsubseteq s'$ which is impossible since $\barr{G}\upharpoonright_{L}$ is a barrier. Therefore, $T_{M,L}(t) \sqsubseteq  s$.
\end{proof}

If follow from Corollary \ref{2.5} that
$$
\psi_{L,M} : \barr{G}\upharpoonright_{L} \rightarrow \barr{F}\upharpoonright_M
$$
is a well-defined function. It follows directly from Theorem \ref{2.4} that this function is also surjective.

\section{Some Remarks on Ramsey's Theorem}

First, we recall a nice construction of Banach spaces. Given an infinite set $I$,  $c_{00}(I)$ will denote the set of all  finitely supported functions from $I$ to $\mathbb{R}$. For $x \in
c_{00}(I)$, we let $supp(x) = \{ i \in I : x(i) \neq 0 \}$  stand for the support of $x$.
Given $i \in I$, we let $e_i: I \to \mathbb{R}$ the function defined by $e_i(i) = 1$ and $e_i(j) = 0$ if $i \neq j$. Let $G_0 = \{\pm e_i^{*} : i \in I \}$. To define a seminorm on  $c_{00}(I)$
we can choose $W \subseteq c_{00}(I)^*$, we shall refer to this as the {\it norming set}, and define
$$
\|x\|_W = \sup\{ |f(x)| : f \in W\}
$$
 for every $x \in c_{00}(I)$. It is clear that $\| \cdot \|_W$ is a seminorm on $c_{00}(I)$. In particular, if $G_0
 \subseteq W$ then $\| \cdot \|_W$ is a norm . The completions of the norm spaces  of the form  $(c_{00}(I),\| \cdot \|_W)$ are sources of many interesting Banach spaces. Notice that if $I$ is
 finite its completion is $\mathbb{R}^{I}$.

\medskip

Now we state the The Ramsey Theorem for Analysts:

\begin{theorem}{\bf [Ramsey Theorem for Analysis]}\label{ramseyanalistas} Let $(X,d)$ be a compact metric space and  $\barr{F} \subseteq FIN$ a family with the Ramsey property. For every function $F:\barr{B} \to X$ and for every sequence
$\epsilon_j \searrow 0$ there are $M =\{ m_1, m_2,\cdots\} \in [\N]^\infty$ and $x \in X$ such that
$$
d(F(s),x) < \epsilon_{min(s)},
$$
for each $s \in \res{\barr{F}}{M}$.
\end{theorem}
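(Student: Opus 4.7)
The plan is to combine compactness of $X$ with iterated applications of the Ramsey property of $\F$ via a diagonal construction that simultaneously produces $M$ and the point $x$.

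I would build, inductively on $k\geq 1$, integers $m_1<m_2<\cdots$ in $\N$, a decreasing chain $\N=N_0\supseteq N_1\supseteq\cdots$ of infinite sets with $m_k\in N_k\subseteq N_{k-1}$, and points $z_k\in X$, satisfying $F(s)\in B(z_k,\delta_k)$ for every $s\in\F\upharpoonright_{N_k}$, where $\delta_k:=\epsilon_{m_k}/2^{k+3}$. At stage $k$, given $m_{k-1}$ and $N_{k-1}$, set $m_k:=\min(N_{k-1}/m_{k-1})$, which fixes $\epsilon_{m_k}$ and hence $\delta_k$; use compactness of $X$ to cover it by finitely many open balls $B(y_1,\delta_k),\ldots,B(y_{r_k},\delta_k)$, and define the finite coloring $c_k\colon\F\to\{1,\ldots,r_k\}$ by $c_k(s):=\min\{j:F(s)\in B(y_j,\delta_k)\}$. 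Apply the Ramsey property of $\F$ to $c_k$ and the infinite set $N_{k-1}$ to obtain $N_k\in[N_{k-1}]^\infty$ on which $c_k$ is constant with value $j_k$, and set $z_k:=y_{j_k}$.

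Next I would extract the limit. Because $N_k\subseteq N_{k-1}$ is infinite, $\F\upharpoonright_{N_k}$ is nonempty, and for any $s$ in it one has $F(s)\in B(z_k,\delta_k)\cap B(z_{k-1},\delta_{k-1})$, whence $d(z_k,z_{k-1})<\delta_k+\delta_{k-1}$. Since $\epsilon$ is non-increasing and $m_j\geq m_k$ for $j\geq k$, the estimate $\delta_j\leq\epsilon_{m_k}/2^{j+3}$ yields $\sum_{j\geq k}\delta_j\leq\epsilon_{m_k}\cdot 2^{-(k+2)}$; in particular $(z_k)$ is Cauchy in the compact space $X$ and converges to some $x\in X$, with $d(z_k,x)\leq 2\sum_{j\geq k}\delta_j\leq\epsilon_{m_k}\cdot 2^{-(k+1)}$.

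Finally, set $M:=\{m_k:k\geq 1\}$. For any $s\in\F\upharpoonright_M$ with $\min(s)=m_k$, the inclusions $m_j\in N_j\subseteq N_k$ for $j\geq k$ give $s\subseteq N_k$, hence $F(s)\in B(z_k,\delta_k)$ and
\[
d(F(s),x)\leq d(F(s),z_k)+d(z_k,x)<\delta_k+\epsilon_{m_k}\cdot 2^{-(k+1)}<\epsilon_{m_k}.
\]
The principal obstacle I anticipate is the bookkeeping step of keeping $m_k$ inside the monochromatic set returned by the Ramsey property at each stage, since that property only guarantees some infinite thinning of $N_{k-1}$ with no a priori control over its minimum element; I would resolve this by coupling the application of Ramsey on $\F$ with a second application on the induced family $\F_{\{m_k\}}$, which inherits the Ramsey property from $\F$ by a routine pull-back of partitions, and then intersecting the two resulting thinnings.
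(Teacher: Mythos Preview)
The paper does not include its own proof of this statement; it is quoted from \cite{Shch}, so there is nothing in-paper to compare against. Your scheme (iterated Ramsey with shrinking radii, Cauchy centers $z_k\to x$, diagonal $M$) is the standard one, and the estimates in your second and third paragraphs are correct once the construction is in place.

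The gap is in your resolution of the obstacle. Your claim that $\F_{\{m_k\}}$ ``inherits the Ramsey property from $\F$ by a routine pull-back of partitions'' is not routine. Pulling a partition $Q_0\cup\cdots\cup Q_r$ of $\F_{\{m_k\}}$ back to $\F$ necessarily creates an extra class $P_{r+1}=\{t\in\F:\min t\neq m_k\}$; when you then apply the Ramsey property of $\F$ on $\{m_k\}\cup N$, the monochromatic set $M'$ returned may fail to contain $m_k$, in which case $\F\upharpoonright_{M'}\subseteq P_{r+1}$ and you learn nothing about $\F_{\{m_k\}}\upharpoonright_{M'}$. For barriers or thin families the hereditary claim holds for structural reasons, but for an arbitrary Ramsey family it is not obvious, and ``intersecting the two thinnings'' does not get around this.

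More to the point, the obstacle only arises because you insist on $\delta_k=\epsilon_{m_k}/2^{k+3}$, which forces $m_k$ to be fixed \emph{before} the Ramsey step. If instead you take $\delta_k:=\epsilon_k/2^{k+3}$, apply Ramsey on $N_{k-1}$ to obtain $N_k$, and only \emph{then} set $m_k:=\min(N_k/m_{k-1})\in N_k$, your own estimates give $d(F(s),x)<\epsilon_k$ whenever $\min(s)=m_k$. That is precisely the form in which the result is used later (compare Theorem~\ref{ramseyanalistas2}, where the bound is $\epsilon_l$ with $l$ the position of $\min(s)$ in $M$). In fact the literal target $d(F(s),x)<\epsilon_{\min(s)}$, with $\min(s)$ the actual integer, is unattainable in general: for $\F=[\N]^1$, $X=[0,1]$, $F(\{n\})=1/n$ and $\epsilon_n=1/n^2$, any infinite $M$ forces $x=0$, yet $1/m<1/m^2$ fails for every $m\in\N$. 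So the difficulty you are trying to repair is not bookkeeping but an over-strong reading of the conclusion; with the reindexing above it disappears.
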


Based on the previous theorem we say that $F$ {\it converges} to $x$ on $M$ if there exists $\epsilon_j \searrow 0$ such that $d(F(s),x) < \epsilon_{min(s)}$ for each $s \in \res{\barr{F}}{M}$. Let us remark that if $N \in [\mathbb{N}]^{\infty}$ satisfies that $N /m \subseteq M$ for some $m \in \mathbb{N}$, then it is also true that $F$ converges to $x$ on $N$.

\medskip

As far as  we know Ramsey's Theorem for Analysis was stated in this form for the first time in \cite{Shch}. Let us  make some important comments about the proof of Brunel-Sucheston Theorem, which
lies in \cite{Shch}, by using Ramsey's Theorem for Analysis. Since our objective is to apply this theorem to obtain different asymptotic structures it is tempting to use (as the $(X,d)$ in (\ref{ramseyanalistas}), for each $k \in \N$, the metric space
$$
\mathcal{M}_k = \{ \|\cdot\|: \mathbb{R}^k \to [0,\infty) : \|\cdot\| \ \text{is a norm and} \ \forall i= 1,\cdots, k(\|e_i\| = 1) \}
$$
with metric given by
$$
d_n(\|\cdot\|_1,\|\cdot\|_2) = \sup \{ \big|\| a \|_1 - \|a\|_2 \big| : a \in [-1,1]^k \}.
$$
for every couple of norms $(\|\cdot\|_1,\|\cdot\|_2)$ of $\mathcal{M}_k$, but these spaces are not compact. In fact, a simple example to see this, for $k=2$, is the sequence of norms on $\mathbb{R}^2$ defined by the sets $W_n = \{ e^{*}_1- e^{*}_2, \frac{1}{n} e^{*}_2, \} \subseteq c_{00}(2)$. It is easy to see that every one of this sets defines a norm element of $\mathcal{M}_2$ and such that the sequence is Cauchy and yet it cannot converge to a norm (since the vector $(1,1)$ would necessarily have ``norm'' $0$ ). If we replace ``norm''  by ``seminorm'' as follows
$$
\mathcal{N}_k = \{ \rho : \mathbb{R}^k \to [0,\infty) : \rho \ \text{is a seminorm and} \ \forall i= 1,\cdots, k(\rho (e_i) = 1) \},
$$
where $\{e_i : 1 \leq i \leq n \}$ be the canonical base of $\mathbb{R}^{n}$, then the assertion is true. We remark that for every $\rho  \in \mathcal{N}_k$ we have that $|\rho(x)| \leq \|x \|_{\ell_1}$ for each $x \in \mathbb{R}^{k}$. Theorem \ref{ramseyanalistas} is often either applied implicitly or its proof is explicitly repeated to various closed subsets of $\mathcal{N}_k$ to obtain certain asymptotic structures.
To guarantee that the seminorms obtained by applying the Ramsey Theorem for Analysts we shall use the following closed subsets of $\mathcal{N}_k$:

$$
  \{ || \cdot || : \mathbb{R}^k \to [0,\infty) : || \cdot ||\ \text{is a norm and} \ \forall i= 1,\cdots, k(||e_i|| = 1)
  $$
  $$
  \text{and} \ bc(e_i)_{i =1}^{k} \leq 2 \}
 $$ and
$$
\{ || \cdot || : \mathbb{R}^k \to [0,\infty) : || \cdot ||\ \text{is a norm and} \ \forall i= 1,\cdots, k(||e_i|| = 1)
$$
$$
\text{and $(e_i)_{i=1}^{k}$ is $2$ unconditional}  \}.
$$

\medskip

Next, we shall state and prove that the Ramsey Coloring Theorem is equivalent to the following notion of oscillation stability on Banach spaces.

\begin{definition}\label{oci} Let $k \in \N$ and let $X$ be a Banach space. We say that a normalized sequence $(e_i)_{i \in \N}$  in $X$  is {\it $k$-oscillation stable} if for every $\epsilon>0$ there exists $M \in \N$ such that for each $n_1,n_2,\cdots,n_k,m_1,m_2,\cdots, m_k \in M$, we have that
$$
\big| ||\sum_{i =1}^{k}a_i e_{n_i} || - ||\sum_{i=1}^{k} a_i e_{m_i} || \big| < \epsilon,
$$
 for all $(a_i)_{i=1}^{k} \in [-1,1]^{k}$.
\end{definition}

Let us explain the name of the notion introduced in  Definition \ref{oci}. In  \cite[Def. III.5.4]{ss},  a function $f : S(X) \to  \mathbb{R}$ called  {\it oscillation
stable} on $X$ if for all infinite dimensional closed subspaces $Y$ of $X$ and $\epsilon >0$ there
is a closed infinite dimensional subspace $Z$ of $Y$ such that
$$
\sup{ \{|f(x) - f(y)| : x, y \in  S(Z) \} } < \epsilon.
$$
Now, given a normalized sequence $(e_i)_{i \in \N}$, it is possible to define the function $\Phi_{k} : [N]^{k}  \rightarrow \mathcal{N}_k$ that takes $F \in \mathcal{N}_k$ to the seminorm generated by $<e_i>_{i \in F}$, since $\mathcal{N}_k$ is a compact metric, by analogy, we get the concept of $k$-oscillation stability  for a normalized sequence.

\begin{theorem}\label{equiva} For every $k \in \N$, the following statements are equivalent.
\begin{enumerate}

\item For every function $C: [\N]^{k} \rightarrow 2$ there exists $i < 2$ and $M \in [\N]^{\infty}$ such that $[M]^{k} \subseteq C^{-1}(i)$.

\item Analysts Ramsey Theorem for $[\N]^{k}$.

\item  In an arbitrary  Banach space $X$, for every $\epsilon > 0$ and for every normalized sequence $(e_i)_{i \in \N}$ in $X$ there exists  $M \in [\N]^{\infty}$ such that $(e_i)_{i \in
    M}$ is  $k$-oscillation stable.
\end{enumerate}
\end{theorem}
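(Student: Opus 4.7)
The plan is to prove the cycle $(1) \Rightarrow (2) \Rightarrow (3) \Rightarrow (1)$, with the first two implications being classical in flavor and the last requiring an explicit Banach-space construction. For $(1) \Rightarrow (2)$, I would first iterate the $2$-color Ramsey Theorem to obtain Ramsey for arbitrary finite partitions of $[\N]^k$, and then use the compactness of $X$: at stage $n$, cover $X$ by finitely many open balls of radius $\epsilon_n/2$ and extract an infinite $N_n \subseteq N_{n-1}$ whose $F$-image on $[N_n]^k$ sits in one such ball around a center $x_n$. After refining the covers so that $(x_n)$ is Cauchy with limit $x$, I diagonalize by choosing $m_i \in N_i$ with $\{m_i, m_{i+1}, \ldots\} \subseteq N_i$ and $\epsilon_{m_i} \le \epsilon_i$; the set $M = \{m_1 < m_2 < \cdots\}$ together with $x$ gives the conclusion.

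For $(2) \Rightarrow (3)$, given a normalized sequence $(e_i)_{i \in \N}$ in a Banach space $X$, I associate the map $\Phi_k : [\N]^k \to \mathcal{N}_k$ defined by $\Phi_k(\{n_1 < \cdots < n_k\})(a_1, \ldots, a_k) := \|\sum_{j=1}^k a_j e_{n_j}\|_X$; as noted in the paper, $\mathcal{N}_k$ is a compact metric space. Fixing any $\epsilon_j \searrow 0$, the Analyst's Ramsey Theorem produces $M \in [\N]^\infty$ and a seminorm $\rho \in \mathcal{N}_k$ with $d_k(\Phi_k(s), \rho) < \epsilon_{\min s}$ on $[M]^k$; truncating $M$ above an index with $2\epsilon_{n_0} < \epsilon$ and invoking the triangle inequality in $\mathcal{N}_k$ delivers the required oscillation bound.

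The heart of the argument is $(3) \Rightarrow (1)$. Given a $2$-coloring $C : [\N]^k \to 2$, I construct a Banach space $X_C$ tailored to expose the color through the norm. Setting $f_s := \sum_{i \in s} e_i^*$, I take $W_C := \{\pm e_i^* : i \in \N\} \cup \{f_s : s \in [\N]^k,\ C(s) = 0\}$ as the norming set, and let $X_C$ be the completion of $c_{00}(\N)$ under $\|x\|_{W_C} := \sup_{f \in W_C} |f(x)|$. The canonical basis $(e_i)$ is normalized. The decisive fact is: for $s = \{n_1 < \cdots < n_k\}$ and $x := \sum_{i \in s} e_i$, one has $\|x\|_{W_C} = k$ when $C(s) = 0$ (realized by $f_s \in W_C$), whereas $\|x\|_{W_C} \le k - 1$ when $C(s) = 1$, because the only $t \in [\N]^k$ with $|t \cap s| = k$ is $t = s$ itself, which is now excluded from $W_C$, so each remaining $f_t \in W_C$ contributes only $|f_t(x)| = |s \cap t| \le k - 1$. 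Applying $(3)$ with $\epsilon = 1/2$ yields an infinite $M$ such that for all $s = \{n_1 < \cdots < n_k\}$ and $t = \{m_1 < \cdots < m_k\}$ in $[M]^k$ and all $a \in [-1,1]^k$, $\big| \|\sum_j a_j e_{n_j}\|_{W_C} - \|\sum_j a_j e_{m_j}\|_{W_C} \big| < 1/2$. Specializing to $a = (1, \ldots, 1)$, a color change within $[M]^k$ would force a norm gap of at least $1$; hence $C$ is constant on $[M]^k$, yielding $(1)$.

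The main obstacle is the asymmetric design of $W_C$: the norming set has to isolate the color of each $k$-subset $s$ on $\mathrm{span}(e_{n_1}, \ldots, e_{n_k})$ while remaining robust against cross-terms from overlapping functionals $f_t$ with $|s \cap t| < k$. The construction above succeeds because $f_s$ is the unique element of the enlarged family whose action on $\sum_{i \in s} e_i$ reaches $k$; dropping it precisely when $C(s) = 1$ produces a strict, quantifiable drop of $1$ in the norm, which is exactly the signal that $k$-oscillation stability must suppress, forcing monochromaticity.
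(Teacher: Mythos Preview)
Your proof is correct and follows essentially the same route as the paper. The paper only spells out $(3)\Rightarrow(1)$, treating $(1)\Rightarrow(2)\Rightarrow(3)$ as standard, and your construction for $(3)\Rightarrow(1)$ is the same norming-set idea: the paper uses $W = G_0 \cup \{\sum_{i\in s}\pm e_i^* : C(s)=0\}$ while you use the slightly leaner $W_C = G_0 \cup \{f_s : C(s)=0\}$, but since the test vector is $\sum_{i\in s} e_i$ with all-positive coefficients, the extra sign choices are irrelevant and the gap $k$ versus $\le k-1$ is obtained identically.
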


\begin{proof} We only need to show the implication $(3) \Rightarrow (1)$.  Let $C : [\mathbb{N}]^{k} \rightarrow 2$ be a function. Then, we define the following norming set
  $$
  W = G_0 \cup \{  \sum_{i \in s} \pm e^{*}_i : s \in [\mathbb{N}]^{k}(C(s) =0 )\}.
  $$
  Notice  $(e_i)_{i \in \mathbb{N}}$ is a normalized sequence. We claim that if $C(t) =1$, then $ ||\sum_{j \in t} e_j||_{W} \leq k-1$. Indeed, suppose that $C(t) =1$ and fix $f \in W$. If
  $f \in G_0$, it is then evident that $f(\sum_{j \in t} e_j) \leq 1$. If $f \in W \setminus G_0$, then there is a $u \in [\mathbb{N}]^{k}$ with $C(u) = 0$ such that $supp(f) = u$. This means
  that $u \not = t$. As $|t| = |u|$, there is  $j_0 \in t \setminus u$. So,
  $$
  f(\sum_{j \in t} e_j) = f(\sum_{j \in t, j \not = j_0} e_j) \leq k-1.
  $$
   It then follows from the definition of the norm $|| \cdot ||_{W}$ that $||\sum_{j \in t} e_j||_{W} \leq k-1$.
   By applying the hypothesis to $\epsilon = \frac{1}{2}$, there exists a $k$-oscillation stable subsequence $(e_{n_i})_{i \in \mathbb{N}}$.  Set $M = \{n_i : i \in \mathbb{N} \}$. Then, we have
   that
  $$
  \left| ||\sum_{i \in s} e_i||_{W}  - ||\sum_{j \in t} e_j||_{W}   \right| \leq \frac{1}{2},
  $$
for every $s, t \in [M]^{k}$. We are done if $C(s) = 1$ for every $s \in [M]^{k}$.
  Suppose that this is not the case. Then, choose $s \in [M]^{k}$ with $C(s) =0$. Notice that
 $||\sum_{i \in s} e_i||_{W} = |s| = k$. Hence, if $t \in [M]^{k}$, then
  \begin{align*}
  \left| ||\sum_{i \in s} e_i||_{W} - ||\sum_{j \in t} e_j||_{W}  \right| &\leq \frac{1}{2} \\
  k - ||\sum_{j \in t} e_j||_{W} &\leq \frac{1}{2}\\
  k - \frac{1}{2} &\leq ||\sum_{j \in t} e_j||_{W}.
  \end{align*}
  So, by the above claim, we obtain that  $C(t) = 0$ for every $t \in [M]^{k}$.
\end{proof}

\begin{corollary}\label{equiva2} The following statements are equivalent.
\begin{enumerate}

\item Ramsey Theorem.

\item Analysis Ramsey Theorem.

\item  Brunel-Sucheston Theorem.
\end{enumerate}
\end{corollary}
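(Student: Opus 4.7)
The plan is to close the cycle $(1)\Rightarrow(2)\Rightarrow(3)\Rightarrow(1)$. For $(1)\Rightarrow(2)$, I would combine the Ramsey property of $\mcf$ (which is hypothesized in the statement of Analysis Ramsey Theorem) with total boundedness of the compact metric space $(X,d)$: recursively cover $X$ by finitely many $\epsilon_n/3$-balls, partition $\mcf$ by their $F$-preimages, and invoke the Ramsey property of $\mcf$ to extract $M_n\subseteq M_{n-1}$ for which $F(\mcf\upharpoonright_{M_n})$ lies inside a single ball $B_n$. The balls shrink to a unique point $x\in X$, and the diagonal set $M=\{m_k:k\in\N\}$ with $m_k\in M_k$ satisfies $d(F(s),x)<\epsilon_{\min(s)}$ for every $s\in\mcf\upharpoonright_M$.

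For $(2)\Rightarrow(3)$, let $(x_n)$ be a normalized basic sequence in a Banach space $X$. For each $k\in\N$ define $F_k:[\N]^k\to\EspN{k}$ by $F_k(s)(a_1,\ldots,a_k)=\|\sum_{i=1}^k a_i x_{s(i)}\|$. Since $\EspN{k}$ with the uniform metric on $[-1,1]^k$ is compact (as noted in the paper) and $[\N]^k$ is a Ramsey family, Analysis Ramsey Theorem produces $M_k\in[\N]^\infty$ and a limit seminorm $\rho_k\in\EspN{k}$ to which $F_k$ converges on $M_k$. Nesting $M_1\supseteq M_2\supseteq\cdots$ and diagonalizing $M=\{n_1<n_2<\cdots\}$ with $n_k\in M_k$ forces every $F_k$ to converge on a tail of $M$. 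The compatibility $\rho_{k+1}(a_1,\ldots,a_k,0)=\rho_k(a_1,\ldots,a_k)$ holds because both sides are limits along $M$ of the same quantity $\|\sum_{i=1}^k a_i x_{s(i)}\|$, so the $\rho_k$ assemble into a norm on $c_{00}$ whose completion is a spreading model of $(x_{n_i})_{i\in\N}$.

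For $(3)\Rightarrow(1)$, I would invoke the $(3)\Rightarrow(1)$ implication of Theorem \ref{equiva} (applied for each $k\in\N$), reducing the task to deducing $k$-oscillation stability of an arbitrary normalized sequence $(e_i)$ in any Banach space from Brunel-Sucheston. If $(e_i)$ has a norm-Cauchy subsequence, it converges to a unit vector $e$, and uniform continuity of $\|\cdot\|$ on $[-1,1]^k$ gives stability at once. Otherwise the Bessaga-Pelczynski selection principle extracts a basic subsequence to which Brunel-Sucheston applies, yielding a further subsequence that generates a spreading model and is therefore $k$-oscillation stable by the defining inequality. The main obstacle is exactly this step, since Brunel-Sucheston covers only normalized basic sequences while Theorem \ref{equiva}$(3)$ quantifies over all normalized ones; the Bessaga-Pelczynski extraction is what bridges that gap.
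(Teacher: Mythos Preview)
The paper offers no proof; the corollary is stated as immediate from Theorem~\ref{equiva} taken over all $k$, and your cycle $(1)\Rightarrow(2)\Rightarrow(3)\Rightarrow(1)$ is the natural way to make that explicit. The arguments for $(1)\Rightarrow(2)$ and $(2)\Rightarrow(3)$ are standard and fine.

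There is a gap in $(3)\Rightarrow(1)$. You correctly spot the mismatch---Brunel--Sucheston is stated only for basic sequences while Theorem~\ref{equiva}(3) speaks of arbitrary normalized sequences---but the Bessaga--Pe\l czy\'nski selection principle does not close it: a normalized sequence with no norm-Cauchy subsequence need not have \emph{any} basic subsequence. Take $x_n=(e_1+e_n)/\sqrt{2}$ in $\ell_2$; then $\|x_n-x_m\|=1$ for $n\neq m$, yet for any subsequence the choice $a_j=1$ for $j\le k$ and $a_j=-1$ for $k<j\le 2k$ gives partial-sum norms with ratio $\sqrt{(k+1)/2}\to\infty$, so no subsequence is basic. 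The correct fix is much simpler than any extraction principle: inspect the proof of $(3)\Rightarrow(1)$ in Theorem~\ref{equiva} and notice that the hypothesis is only ever applied to the unit vector basis of the space with norming set $W=G_0\cup\{\sum_{i\in s}\pm e_i^{*}: C(s)=0\}$. Since $W$ is closed under all coordinate sign changes, that basis is $1$-unconditional, hence basic with constant $1$, and Brunel--Sucheston applies to it directly; the resulting spreading-model subsequence is $k$-oscillation stable for every $k$, and the coloring argument of Theorem~\ref{equiva} then yields Ramsey. No Cauchy/non-Cauchy dichotomy is needed.
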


\section{Plegma Families}

We start this section with a small modification of the notion of plegma family introduced in \cite{akt}.

 \begin{definition}\label{plegma}
 Let $n \in \N$. A finite sequence  $(s_i)_{i=1}^n$  of $FIN$ is called {\it plegma} if the following conditions hold:
 \begin{enumerate}
 \item $|s_1| \leq |s_2| \leq\cdots\leq |s_n|$, and

 \item $s_i(k) < s_{i+1}(k) <\cdots.< s_n(k) < s_j(k+1)$ for each $i, j = 1,\cdots, n$ and for each $0 < k \leq \min\{|s_i|,|s_j|\}$.
 \end{enumerate}
For  $\barr{B} \subseteq \FIN$  and   $n \in \N$, the $n$-{\it plegma family} of $\barr{B}$ is the family
 \begin{align*}
 \plegman{\barr{B}}{n} = \{ (s_i)_{i=1}^n \in \barr{B}^{n} :   (s_i)_{i=1}^n \ \textit{is plegma} \ \}.
 \end{align*}
 \end{definition}

    We remark that this definition of plegma family allows the finite sets be empty. For instance the family $(\emptyset, \emptyset, \cdots, \emptyset)$ is always a plegma family according to our definition. This is very useful in the induction steps of several proofs in this paper.

  \medskip

  In the article \cite{akt},
 the authors showed that if $\barr{B}$ is a spreading barrier, then  $\plegman{\barr{B}}{n}$  has  the Ramsey property for all $n \in \mathbb{N}$.

\medskip

  In the next lemma we prove that for finitely many uniform barriers of increasing uniformity we can always find plegma sequences with very strong properties somehow related to each given
  barrier.

 \begin{lemma}
 \label{util}
 Let $n \in \N$ and $\barr{B}_1, \barr{B}_2,\cdots, \barr{B}_n$ uniform barriers on $\mathbb{N}$ of uniformities $\gamma_1, \gamma_2, \cdots, \gamma_n$, respectively, such that $0 < \gamma_1 \leq \gamma_2 \leq \cdots \leq \gamma_n$. Then for every $M \in [\mathbb{N}]^{\infty}$ there are $m_1 < m_2 \cdots< m_n \in M$ such that $\barr{B}_1{_{\{m_1\}}},\barr{B}_2{_{\{m_2\}}},\cdots,\barr{B}_n{_{\{m_n\}}}$ have uniformity  $\lambda_1 \leq \lambda_2\cdots\leq \lambda_n $, respectively, and $\lambda_n < \gamma_n$.
 \end{lemma}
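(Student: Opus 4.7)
My proof proceeds by induction on $n$, unwinding the recursive definition of $\alpha$-uniformity. In the base case $n = 1$, I simply pick any $m_1 \in M$: since $\gamma_1 > 0$, the definition of uniformity forces $\barr{B}_1{_{\{m_1\}}}$ to be $\lambda_1$-uniform for some ordinal $\lambda_1 < \gamma_1$, which gives the conclusion (with the inequality $\lambda_n < \gamma_n$ vacuous beyond the single constraint just recorded).

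For the inductive step I assume the result for $n-1$ and apply it to $\barr{B}_1, \ldots, \barr{B}_{n-1}$ and $M$, obtaining $m_1 < \cdots < m_{n-1}$ in $M$ together with ordinals $\lambda_1 \leq \cdots \leq \lambda_{n-1} < \gamma_{n-1}$ such that each $\barr{B}_i{_{\{m_i\}}}$ is $\lambda_i$-uniform; note that this yields $\lambda_{n-1} < \gamma_{n-1} \leq \gamma_n$. It remains to choose $m_n \in M$ with $m_n > m_{n-1}$ so that the uniformity $\lambda_n$ of $\barr{B}_n{_{\{m_n\}}}$ satisfies $\lambda_{n-1} \leq \lambda_n < \gamma_n$. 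I split into two cases according to the nature of $\gamma_n$. If $\gamma_n = \beta + 1$ is a successor, then by the very definition $\barr{B}_n{_{\{m\}}}$ is $\beta$-uniform for every $m$, so any $m_n \in M$ with $m_n > m_{n-1}$ works: $\lambda_n = \beta$, and $\lambda_{n-1} < \gamma_n = \beta + 1$ forces $\lambda_{n-1} \leq \beta = \lambda_n < \gamma_n$. If instead $\gamma_n$ is a limit, the definition provides a sequence $\alpha_m^{(n)} \nearrow \gamma_n$ with $\barr{B}_n{_{\{m\}}}$ being $\alpha_m^{(n)}$-uniform for each $m$. Because $\lambda_{n-1} < \gamma_n = \sup_m \alpha_m^{(n)}$, the inequality $\alpha_m^{(n)} \geq \lambda_{n-1}$ holds for all sufficiently large $m$; since $M$ is infinite I may then select $m_n \in M$ with both $m_n > m_{n-1}$ and $\alpha_{m_n}^{(n)} \geq \lambda_{n-1}$, and setting $\lambda_n := \alpha_{m_n}^{(n)}$ delivers $\lambda_{n-1} \leq \lambda_n < \gamma_n$ as required.

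I do not expect any substantive obstacle: the argument is essentially a careful bookkeeping of the recursive definition of uniformity. The one delicate point is the limit case in the inductive step, where I must exploit the strict inequality $\lambda_{n-1} < \gamma_n$ (inherited from the induction hypothesis via $\gamma_{n-1} \leq \gamma_n$) to guarantee that the cofinal sequence $\alpha_m^{(n)}$ eventually dominates $\lambda_{n-1}$. It is precisely this fact that lets me maintain monotonicity of the $\lambda_i$'s while keeping $\lambda_n$ strictly below $\gamma_n$.
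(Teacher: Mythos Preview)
Your proof is correct and follows essentially the same approach as the paper: both arguments build the sequence $m_1 < \cdots < m_n$ one step at a time, and at each step choose $m_i \in M$ by splitting on whether $\gamma_i$ is a successor (in which case every $m$ gives uniformity $\gamma_i - 1$) or a limit (in which case the cofinal sequence $\alpha_m \nearrow \gamma_i$ eventually exceeds $\lambda_{i-1}$). The only cosmetic difference is that you frame the construction as a formal induction on $n$, whereas the paper writes it as a direct recursive choice.
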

 \begin{proof}
   Take an arbitrary $m_1 \in \mathbb{N}$ and notice that $\barr{B}_1{_{\{m_1\}}}$ has uniformity $\lambda_1  < \gamma_1$. Since $\gamma_1 \leq \gamma_2$, we claim that there is $m_2 \in M$ with $m_2 > m_1$ such that $\barr{B}_2{_{\{m_2\}}}$  has uniformity $\lambda_2$ and $\lambda_1 \leq \lambda_2 < \gamma_2$. Indeed, in the case that $\gamma_2$ is a limit ordinal, we have by definition of uniformity that $sup\{ unif(\barr{B}_2{_{\{j\}}} ) : j \in \mathbb{N}  \} = \gamma_2$. In the case that $\gamma_2$ is a successor ordinal, again by definition of uniformity, it follows that $unif(\barr{B}_2{_{\{j\}}}) = \gamma_2 -1$ for each $j \in \mathbb{N}$. In both cases the choice of an appropriate $m_2$ is simple.  By applying this argument recursively, we define natural numbers $m_1 <m_2 <\cdots< m_n$ in $M$ and  barriers $\barr{B}_1{_{\{m_1\}}},\barr{B}_2{_{\{m_2\}}},\cdots,\barr{B}_n{_{\{m_n\}}}$ of uniformity  $\lambda_1 \leq \lambda_2 \cdots \leq \lambda_n $, respectively, such that $\lambda_n < \gamma_n$.
 \end{proof}

 \begin{lemma}
 \label{existenciaplegma}
 Let $n \in \N$ and $\barr{B}_1, \barr{B}_2,\cdots, \barr{B}_n$ uniform barriers on $\mathbb{N}$ of uniformities $\gamma_1, \gamma_2, \cdots, \gamma_n$, respectively, satisfying $\gamma_1 \leq \gamma_2 \leq \cdots \leq \gamma_n$. Then for every  $M \in [\mathbb{N}]^\infty$ there exist $s_1 \in \barr{B}_1\upharpoonright_M, s_2 \in \barr{B}_2\upharpoonright_ M,\cdots,s_n \in \barr{B}_n\upharpoonright_ M$ such that the sequence $(s_i)_{i =1}^{n}$ is  plegma.
 \end{lemma}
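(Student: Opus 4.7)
The plan is to prove the lemma by transfinite induction on the maximum uniformity $\gamma_n$, with Lemma \ref{util} driving the induction step. For the base case $\gamma_n = 0$ every $\gamma_i$ equals zero, so each $\barr{B}_i = \{\emptyset\}$ and the tuple $(\emptyset, \dots, \emptyset)$ serves as the required plegma family (see the remark following Definition \ref{plegma}).

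For the inductive step assume $\gamma_n > 0$. First peel off the degenerate indices: set $p = \min\{i : \gamma_i > 0\}$ and put $s_1 = \cdots = s_{p-1} = \emptyset$. Now apply Lemma \ref{util} to the barriers $\barr{B}_p, \dots, \barr{B}_n$, which have strictly positive uniformities, to obtain $m_p < m_{p+1} < \cdots < m_n$ in $M$ such that each $\barr{B}_i{_{\{m_i\}}}$ has some uniformity $\lambda_i$ with $\lambda_p \leq \cdots \leq \lambda_n < \gamma_n$. Since $\lambda_n < \gamma_n$, the induction hypothesis applies to the restricted barriers $\barr{B}_p{_{\{m_p\}}}, \dots, \barr{B}_n{_{\{m_n\}}}$ together with the tail $M / m_n$, yielding a plegma sequence $(t_i)_{i=p}^n$ with $t_i \in \barr{B}_i{_{\{m_i\}}} \upharpoonright_{M / m_n}$. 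I then set $s_i = \{m_i\} \cup t_i$ for $i = p, \dots, n$; by the definition of $\barr{B}_i{_{\{m_i\}}}$ each such $s_i$ lies in $\barr{B}_i \upharpoonright_M$.

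It remains to verify the plegma property for $(s_i)_{i=1}^n$. The length chain $|s_1| \leq \cdots \leq |s_n|$ is immediate because $|s_i| = 0$ for $i < p$ and $|s_i| = 1 + |t_i|$ for $i \geq p$, together with $|t_p| \leq \cdots \leq |t_n|$. The order condition for indices $i < p$ is vacuous, since then $|s_i| = 0$. For $i, j \geq p$ and $k = 1$, the required chain $s_i(1) < \cdots < s_n(1) < s_j(2)$ reads $m_i < \cdots < m_n < t_j(1)$, which holds by the choice of the $m_i$ and because $t_j \subseteq M / m_n$. For $k \geq 2$, the chain reduces directly to the plegma condition for $(t_i)_{i=p}^n$ at position $k-1$.

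The only real subtlety is that Lemma \ref{util} demands strictly positive uniformities, which is what forces the separate treatment of the indices with $\gamma_i = 0$; once that bookkeeping is in place, the induction step amounts to inheriting the plegma property from a lower level of uniformity.
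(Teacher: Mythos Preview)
Your proof is correct and follows essentially the same strategy as the paper's: transfinite induction on $\gamma_n$, peeling off the indices with zero uniformity, invoking Lemma~\ref{util} to select the $m_i$'s, and then prepending these to the plegma family obtained from the induction hypothesis on $M/m_n$. The only cosmetic difference is that the paper takes $\gamma_n < \omega$ as its base case (handling all finite uniformities at once via the explicit description $\barr{B}_j\upharpoonright_M = [M]^{\gamma_j}$ and the modular partition of $M$), whereas you use the simpler base case $\gamma_n = 0$ and let the inductive step absorb the finite levels as well.
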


 \begin{proof}
 The proof goes by induction on $\gamma_n$. Assume that  $\gamma_n < \omega$. We know that  $[M]^{\gamma_j}$ is the unique $\gamma_j$ uniform barrier on $M = \{m_i : i \in \N\}$ for each $1 \leq
 j \leq n$. So, for every $1\leq j \leq n$, we take  $s_j \in \barr{B}_j$ so that
 \begin{align*}
 s_j \sqsubseteq M_j := \{m_i : i \equiv j \ \text{mod} (n) \}.
 \end{align*}
  Notice that $|s_j| = \gamma_j$ for each $1\leq j \leq n$. So, by hypothesis, we get $|s_1| \leq |s_2| \leq\cdots\leq |s_n|$.  It is not hard to see that $(s_j)_{j=1}^n$ has the desire properties. Next, we assume  that the theorem holds for uniform barriers of uniformities $\lambda_1 \leq \lambda_2,\cdots,\leq \lambda_n $, respectively, where $\lambda_n < \gamma_n$. By applying Lemma \ref{util} to $M$ and the barriers $\barr{B}_{i_0}, \barr{B}_{i_0 +1},\cdots, \barr{B}_n$, where $i_0 \in \mathbb{N}$ is the first element satisfying $\gamma_{i_0} > 0$ we can find $m_{i_0} < m_{i_0 +1} \cdots< m_n \in M$ such that $\barr{B}_{i_0}{_{\{m_{i_0}\}}},\barr{B}_{i_0 +1}{_{\{m_{i_0 +1}\}}},\cdots,\barr{B}_n{_{\{m_n\}}}$ have uniformity  $\lambda_{i_0} \leq \lambda_{i_{i_0 +1}}\cdots\leq \lambda_n $, respectively, and $\lambda_n < \gamma_n$. Now, we  apply the induction hypothesis to these barriers and  $M' = M/\{ m_n\}$ to find $s'_i \in \barr{B}_i{_{\{m_i\}}} \upharpoonright_ {M'}$, for each $i_0 \leq i \leq n$, satisfying properties $(1)$ and $(2)$ of Definition \ref{plegma}. It is easy to see
  that the family $s_j = \emptyset$ for $j < i_0$ and $s_{i_0} = \{m_{i_0}\} \cup s'_{i_0}, s_{i_0 +1} = \{m_{i_0 +1}\} \cup s'_{i_0 +1},\cdots,s_n = \{m_n\} \cup s'_n$ is plegma.
 \end{proof}

By using previous theorems, we apply Theorem  \ref{ramseyanalistas} as follows:

 \begin{theorem}
 \label{ramseyanalistas2}
 Let $(X,d)$ be a compact metric space, $\barr{B}$ an spreading barrier and $n \in \N$. For every function $F: \plegman{\barr{B}}{n} \rightarrow X$ and for every sequence $\epsilon_i\searrow 0$
 there exists  $M= \{ m_i : i \in \mathbb{N}\} \in [\mathbb{N}]^\infty$ and $x \in X$ such that
 \begin{align}
 \label{nice}
 \forall l \in \mathbb{N}\forall s \in \plegman{\barr{B}\upharpoonright_{M/{m_l}}}{n}(d(F(s),x) < \epsilon_{l})
 \end{align}
 \end{theorem}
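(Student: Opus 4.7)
The plan is to adapt the proof of Theorem \ref{ramseyanalistas} to plegma families, exploiting the Ramsey property of $\plegman{\barr{B}}{n}$ (recalled above from \cite{akt}) together with compactness of $(X,d)$ and a standard diagonalization. For each $k \in \N$, use compactness to cover $X$ by finitely many open balls $B(x^k_j, \epsilon_k/3)$, $j=1,\ldots,N_k$, and define a finite coloring $c_k : \plegman{\barr{B}}{n} \to \{1,\ldots,N_k\}$ sending $s$ to the least index $j$ with $F(s) \in B(x^k_j, \epsilon_k/3)$.

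Next, construct by induction a decreasing chain $\N = L_0 \supseteq L_1 \supseteq L_2 \supseteq \cdots$ of infinite subsets of $\N$ and indices $j_k$ such that every plegma sequence in $\plegman{\barr{B}\upharpoonright_{L_k}}{n}$ has $c_k$-color $j_k$: the passage from $L_{k-1}$ to $L_k$ is a single application of the Ramsey property of $\plegman{\barr{B}}{n}$ to the finite partition induced by $c_k$ inside $L_{k-1}$. Set $y_k = x^k_{j_k}$ and pick $\ell_k \in L_k$ with $\ell_1 < \ell_2 < \cdots$, so that $L := \{\ell_k : k \in \N\}$ satisfies $L / \ell_k \subseteq L_k$ for every $k$ (since $\ell_j \in L_j \subseteq L_k$ for all $j > k$). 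Consequently, for every $k$ and every $s \in \plegman{\barr{B}\upharpoonright_{L/\ell_k}}{n}$ one has $d(F(s),y_k) < \epsilon_k/3$.

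Finally, using sequential compactness of $X$, extract a subsequence $(y_{k_l})_{l\in\N}$ converging to some $x \in X$, and then thin it further so that $k_l \geq l$ and $d(y_{k_l},x) < 2\epsilon_l/3$ for all $l$. Putting $m_l := \ell_{k_l}$ and $M := \{m_l : l \in \N\}$, any $s \in \plegman{\barr{B}\upharpoonright_{M/m_l}}{n}$ satisfies $s \subseteq M/m_l \subseteq L/\ell_{k_l} \subseteq L_{k_l}$, whence $d(F(s), y_{k_l}) < \epsilon_{k_l}/3 \leq \epsilon_l/3$; the triangle inequality then yields $d(F(s), x) < \epsilon_l$, as required. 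The main delicate point is precisely this coordinated double diagonalization in the last two steps: one must simultaneously control which $L_k$ governs a given tail of $M$ \emph{and} how close the corresponding centers $y_k$ are to the limit $x$, so that the two $\epsilon/3$-errors combine into the desired strict $\epsilon_l$-bound rather than merely an $\epsilon_{k_l}$-bound.
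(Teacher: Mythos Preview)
Your argument is correct: the Ramsey property of $\plegman{\barr{B}}{n}$ (quoted just before Definition~\ref{plegma} from \cite{akt}) supplies the monochromatic sets $L_k$, the diagonal set $L$ has $L/\ell_k\subseteq L_k$, and the second diagonalization through a convergent subsequence of the centers $y_k$ yields the desired $M$ and $x$. The only cosmetic point is that after the Ramsey step ``at most one'' color class survives; if none does you may set $y_k$ arbitrarily, since the bound $d(F(s),y_k)<\epsilon_k/3$ is then vacuous.

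However, your route is genuinely different from the paper's. The paper does not redo the compactness--coloring--diagonalization argument at all. Instead it observes that the map $\phi:(s_i)_{i=1}^{n}\mapsto\bigcup_{i=1}^{n}s_i$ is injective on $\plegman{\barr{B}}{n}$ and that its image is a \emph{thin} family in $FIN$; by Nash--Williams this image has the Ramsey property, so Theorem~\ref{ramseyanalistas} applies verbatim to $F\circ\phi^{-1}$ and yields $M$ and $x$ in one stroke. In other words, the paper reduces Theorem~\ref{ramseyanalistas2} to Theorem~\ref{ramseyanalistas} via $\phi$, whereas you reprove Theorem~\ref{ramseyanalistas} inside the plegma setting. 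The paper's approach is shorter and avoids the double diagonalization you flagged as delicate; your approach, on the other hand, is self-contained (it uses only the Ramsey property of $\plegman{\barr{B}}{n}$ as a black box, without invoking thinness of $\phi(\plegman{\barr{B}}{n})$) and makes the $\epsilon$-bookkeeping completely explicit.
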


 \begin{proof}
 It is easy to see that the function $\phi$ defined\footnote{See definition of $\phi$ on page $3$ after Definition 2.1.} on $\plegman{\barr{B}}{n}$ is injective and that the image of $\barr{B}$ is a thin family. In particular, it has the Ramsey property. The conclusion follows by applying Theorem \ref{ramseyanalistas} to this image.
 \end{proof}

This theorem will be apply, in the next section,   to the metric space $(\EspN{k},d_k)$, for  $k \in \N$, but for the sake of completeness  let us show that it is  compact.

  \begin{lemma}
  For every natural number $k$, the space $(\EspN{k},d_k)$ is compact and metric.
  \end{lemma}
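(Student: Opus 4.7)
The plan is to identify $\EspN{k}$ with an Arzel\`a--Ascoli-compact family of continuous functions on $[-1,1]^k$. Before anything else I would confirm that $d_k$ is a well-defined metric: the bound $\rho(a) \leq \sum_{i=1}^{k}|a_i|\rho(e_i) = \|a\|_{\ell_1}$ (already recorded in the excerpt) forces the supremum in the definition of $d_k$ to be at most $k$, the triangle inequality and symmetry are immediate, and if $d_k(\rho_1,\rho_2) = 0$ then the two seminorms agree on $[-1,1]^k$, whence by positive homogeneity they agree on all of $\R^k$.

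The key input for equicontinuity is the observation that every $\rho \in \EspN{k}$ is $1$-Lipschitz for the $\ell_1$-norm: combining $\rho(a) \leq \rho(a-b)+\rho(b)$ with its symmetric version yields $|\rho(a)-\rho(b)| \leq \rho(a-b) \leq \|a-b\|_{\ell_1}$. Restricted to $[-1,1]^k$, the family $\EspN{k}$ is therefore uniformly bounded (by $k$) and equicontinuous, so Arzel\`a--Ascoli guarantees that any sequence $(\rho_n)_{n \in \N} \subseteq \EspN{k}$ admits a subsequence $(\rho_{n_j})$ converging uniformly on $[-1,1]^k$ to some continuous function $\rho^* \colon [-1,1]^k \to [0,\infty)$.

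It remains to show that $\rho^*$ lies in $\EspN{k}$, and this is the point requiring the most care. I would extend $\rho^*$ to $\R^k$ by setting $\rho^*(a) := \lambda\,\rho^*(a/\lambda)$ for any $\lambda \geq \|a\|_{\infty}$; this is unambiguous since each $\rho_{n_j}$ already obeys this scaling identity and the relation is stable under pointwise limits. Positive homogeneity and subadditivity $\rho^*(a+b) \leq \rho^*(a) + \rho^*(b)$ then pass from the $\rho_{n_j}$ to $\rho^*$ by pointwise passage to the limit, and $\rho^*(e_i) = \lim_j \rho_{n_j}(e_i) = 1$ for each $i$, so $\rho^* \in \EspN{k}$. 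Since $d_k$ only measures uniform distance on $[-1,1]^k$, the uniform convergence of the subsequence there gives $d_k(\rho_{n_j},\rho^*) \to 0$, which yields sequential compactness. The (mild) obstacle is precisely the bookkeeping around the homogeneous extension and verifying that seminorm identities survive the limit; everything else is a direct application of Arzel\`a--Ascoli.
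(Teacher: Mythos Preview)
Your argument is correct, but the paper proceeds differently. Instead of appealing to Arzel\`a--Ascoli, it shows directly that $(\EspN{k},d_k)$ is complete and totally bounded. Completeness is handled in one line (a $d_k$-Cauchy sequence of seminorms has a pointwise limit on $[-1,1]^k$, which extends to a seminorm on $\R^k$---essentially the same closure step you carry out). For total boundedness the paper builds an explicit $\epsilon$-net: fix finite $\tfrac{\epsilon}{4}$-nets $B\subseteq ([-1,1]^k,\|\cdot\|_{\ell_1})$ and $A\subseteq [0,k]$, and for each map $f:B\to A$ collect those $\rho$ with $|\rho(b)-f(b)|<\tfrac{\epsilon}{4}$ for all $b\in B$; the $1$-Lipschitz bound $|\rho(x)-\rho(b)|\le\|x-b\|_{\ell_1}$ (the same inequality you isolate) forces each such cell to have $d_k$-diameter below $\epsilon$. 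Your route is cleaner because it offloads this bookkeeping to Arzel\`a--Ascoli, while the paper's is fully self-contained; both rest on the identical Lipschitz estimate, so the difference is packaging rather than substance.
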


  \begin{proof}
  We prove that the space is both complete and totally bounded. Indeed, it is not hard to se  that it is complete since any Cauchy sequence of $(\EspN{k},d_k)$ determines a seminorm on $\mathbb{R}^k$. To see that it is totally bounded fix $\epsilon >0$ and choose   a finite $\frac{\epsilon}{4}$-net $B$ in $([-1,1]^{k},\norm{\cdot}_{\ell_1})$ and $A$  a  finite $\frac{\epsilon}{4}$-net in $[0,k]$. For every function $f : B \rightarrow A$ define
  \begin{align*}
  C_f = \{\rho \in \EspN{k}: \forall b \in B (|\rho(b) - f(b)| < \frac{\epsilon}{4} ) \}.
  \end{align*}
   Notice that any two elements $\rho_1, \rho_2 \in C_f$ satisfy that $d_k(\rho_1, \rho_2) < \epsilon$. To see this take $x \in [-1,1]^{n}$ and $b \in B$ so that $\norm{x-b}_{\ell_1} < \frac{\epsilon}{4}$.
   Then, we have that
   \begin{align*}
   |\rho_1(x) - \rho_2(x)| &\leq |\rho_1(x) - \rho_1(b)| + |\rho_1(b) - f(b)| \\
   &+ |f(b) - \rho_2(b)| + |\rho_2(b) - \rho_2(x) |  \\
   &\leq 2 \norm{x-b}_1 + 2 (\frac{\epsilon}{4}) < \epsilon.
   \end{align*}
      Now for every nonempty $C_f$ we fix  $\rho_f \in C_f$. We claim that  $\{\rho_f : f : B \rightarrow A \}$ is an
   $\epsilon$-net in $\EspN{n}$.  Indeed, for $\rho$ in $\EspN{n}$ we consider the function $f: B \to A$ defined by
  \begin{align*}
  f(b) = \min\{a \in A : |\rho(b) -a| < \frac{\epsilon}{4} \},
  \end{align*}
  this set is not empty since $0 \leq \rho(b) \leq \norm{b}_{\ell_1} \leq k$ and by the definition of $A$. It is clear that  $\rho$ is an element of $C_{f}$ which implies, by our claim, that $d_n(\rho_f, \rho) < \epsilon$.
   \end{proof}

 \section{$\mathcal{F} \times \N$- matrices}

 The purpose of this section is to apply Theorem~\ref{ramseyanalistas2} to obtain higher order asymptotic models. We follow the basic idea of asymptotic models from \cite{asym} and the
extension of spreading model from \cite{akt}.

 \begin{definition} Let $X$ be a Banach space and let $\mathcal{F} \subseteq \FIN$.  An $\mathcal{F}$-{\it sequence}  in $X$ is a sequence $\sucb{x}{\barr{F}}$ in $S(X)$
 indexed by elements of $\mathcal{F}$. A sequence of $\mathcal{F}$-sequences $\matb{x}{F}$   will be named $\mathcal{F} \times \N$-{\it matrix}.
 \end{definition}

 Following the analog definition in \cite{akt} for spreading models, we present the concept of a higher order asymptotic model.
 \begin{definition}
 Let $X$ be a Banach space, $\barr{B}$ a barrier and $\matb{x}{\barr{B}}$ a $\mathcal{B} \times \mathbb{N}$-matrix on $X$. We say that  a Banach space  $E$ with a normalized Schauder basis $(e_i)_{i \in \mathbb{N}}$ is an {\it asymptotic model} of  $\matb{x}{\barr{B}}$ if there exists $\epsilon_m \searrow 0$  such that for every $n \in \N$
 $$
 \forall m \in \mathbb{N}/ n \forall s \in (\plegman{\barr{B}/m}{n}) \Big( \Big| ||\sum_{i =1}^{n}{a_i e_i}||_E - ||\sum_{i=1}^{n}{a_i x^{i}_{s_i}} ||_X \Big| < \epsilon_m \Big),
$$
 for every $(a_i)_{i = 1}^n \in [-1,1]^{n}$. If  $\barr{B}$ is a  $\xi$-uniform barrier, then we say that the  $\mathcal{B} \times \mathbb{N}$-matrix $\matb{x}{\barr{B}}$ generates $(e_i)_{i \in \mathbb{N}}$ as an {\it asymptotic model of order} $\xi$.
 \end{definition}
 Notice that the usual definition of asymptotic model is easily recovered by using the barrier $\mathbb{N}$ in the previous definition. Given a Banach space $X$, we denote by $\MA{\xi}{X}$ the set of all asymptotic models of order $\xi$ and will refer to the elements of $\MA{\xi}{X}$ as the $\xi$-asymptotic models of $X$.

\medskip

Generalizing the notion of subsequence of a sequence in the context of $\barr{B}\times \N$-matrices we have the following.

 \begin{definition}
 Let $\matb{x}{\barr{B}}$ be an $\barr{B}\times \N$ -matrix. A {\it submatrix} of  a  $\barr{B}\times \N$-matrix $\matb{x}{\barr{B}}$ is a matrix of the form  $\matbr{x}{\barr{B} \upharpoonright_ M}$ where $M \in [\mathbb{N}]^\infty$.
 \end{definition}

Below, we shall see that it is possible to replace a submatrix indexed on a uniform barrier that generates certain higher order asymptotic model by one indexed on a spreading barrier with the same uniformity and generates the same higher order asymptotic model. In order to do this, we first establish some preliminary results.

\begin{lemma}
\label{plegmagrande}
Let $n \in \N $. If $L \in [\N]^{\infty}$ and $M \in [L]^{\infty}$ satisfy that
$$
|(m, m') \cap L| \geq n,
$$
for each $m, m'$ in $M $ with $m < m'$, then for every plegma $(t_i)_{i =1}^{n}$ in $M$ there exists a plegma $(t'_i)_{i=1}^{n} \in [[L]^{<\infty}]^{n}$ such that
\begin{itemize}
\item $t'_i \subseteq L / \{ max \{j \in M : j < min\bigcup_{i =1}^{n} t_i \}\}$, for each $i=1, \cdots, n$.
\item  $t_i \sqsubseteq t'_i$, for each $i=1, \cdots, n$.
\item $|t'_1|=|t'_2|= \cdots=|t'_n| = |t_n|$.
\end{itemize}
\end{lemma}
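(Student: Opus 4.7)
The plan is to set $t'_n := t_n$ (which already has the maximal length $K := |t_n|$), and for each $i < n$ to extend $t_i$ to length $K$ by inserting the missing entries, drawn from $L$ in the appropriate gaps of the plegma structure. The key quantitative input is the hypothesis that $|(m,m') \cap L| \geq n$ whenever $m<m'$ lie in $M$; since each column of the plegma needs at most $n-1$ new entries, this guarantees enough room in every gap.

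For $k \in \{1,\ldots,K\}$ set $i_k := \min\{i : |t_i| \geq k\}$; since $|t_1| \leq \cdots \leq |t_n|$, the sequence $(i_k)_k$ is non-decreasing with $i_K \leq n$, and the rows missing an entry in column $k$ are exactly $\{1,\ldots,i_k-1\}$. By the plegma condition on $(t_i)$, the existing entries in column $k$ satisfy $t_{i_k}(k) < t_{i_k+1}(k) < \cdots < t_n(k)$ and, for $k \geq 2$, also $t_n(k-1) < t_{i_k}(k)$. Adopting the convention $t_n(0) := \max\{j \in M : j < \min\bigcup_{i=1}^n t_i\}$, in all cases the interval $(t_n(k-1), t_{i_k}(k))$ has both endpoints in $M$, so the hypothesis supplies at least $n$ elements of $L$ inside it. Choose $i_k - 1$ of them in increasing order $a^k_1 < \cdots < a^k_{i_k-1}$ and declare $t'_i(k) := a^k_i$ for $i < i_k$, while keeping $t'_i(k) := t_i(k)$ for $i \geq i_k$. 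Finally, set $t'_i := \{t'_i(1),\ldots,t'_i(K)\}$ for each $i \leq n$.

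Verification is then routine. By construction $|t'_i| = K = |t_n|$ and $t_i \sqsubseteq t'_i$. Within column $k$ the entries $a^k_1 < \cdots < a^k_{i_k-1} < t_{i_k}(k) < \cdots < t_n(k)$ are strictly increasing, and between columns $k$ and $k+1$ one has $t'_n(k) = t_n(k) < a^{k+1}_1 = t'_1(k+1)$, yielding the plegma condition on $(t'_i)_{i=1}^n$. Every new entry lies in $L$, every old entry lies in $M \subseteq L$, and every entry of every $t'_i$ exceeds $\max\{j \in M : j < \min\bigcup_{i=1}^n t_i\}$: the added entries in column $k=1$ are so by direct choice; the added entries in column $k \geq 2$ lie above $t_n(k-1) \geq t_n(1) \geq t_{i_1}(1) = \min\bigcup t_i$; and the unmodified entries from the $t_i$ are all at least $t_{i_1}(1) = \min\bigcup t_i$.

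I do not expect serious difficulty here: the construction is direct, not inductive. The only mildly subtle point is the boundary case $k=1$ with $i_1 > 1$ (permitted by the paper's convention that plegma families may contain empty sets), and this is precisely what motivates the lower bound $\max\{j \in M : j < \min \bigcup t_i\}$ in the conclusion, so that the hypothesis on $|(m,m') \cap L|$ can be applied uniformly, including to the leftmost gap.
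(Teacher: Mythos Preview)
Your argument is correct and takes a genuinely different route from the paper. The paper proceeds by induction on $|t_n|$: in the inductive step it strips off the first column (setting $s_i = r_i \setminus \{\min r_i\}$ for nonempty $r_i$), applies the hypothesis to the shorter plegma, and then reattaches the removed minima. You instead give a direct, column-by-column construction, inserting the at most $n-1$ missing entries of column $k$ into the $M$-gap $(t_n(k-1),\,t_{i_k}(k))$. Your approach makes the role of the hypothesis $|(m,m')\cap L|\geq n$ completely transparent and handles empty $t_i$'s uniformly; the paper's inductive step as written actually leaves $|r'_i|=k$ rather than $k+1$ when $r_i=\emptyset$, though this does not affect the sole application in Lemma~\ref{plegmaaplegma}, where all $t_i$ come from a barrier and are nonempty. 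One minor point in your verification: writing the inter-column inequality as $t'_n(k)=t_n(k) < a^{k+1}_1 = t'_1(k+1)$ tacitly assumes $i_{k+1}>1$; when $i_{k+1}=1$ (no fillers needed in column $k+1$) the required $t_n(k) < t_1(k+1)$ follows directly from the original plegma condition on $(t_i)_{i=1}^n$.
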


\begin{proof}
The proof will be by induction on $|t_n|$. In the case where $|t_n| = 1$ take $A = \{1 \leq  i \leq n : t_i = \emptyset \}$ and $B = \{1 \leq i \leq n : t_i \neq \emptyset \}$. For each $i \in A$ take a $l_i \in L \cap ( max \{j \in M : j < min\bigcup_{i =1}^{n} t_i \},min\bigcup_{i =1}^{n} t_i ) $ such that $i < j$ implies $l_i < l_j$. This can be done because of the hypothesis on the size of this intersection. It is clear that $t'_i = \{l_i \}$ for each $i \in A$ and $t'_i = t_i$ for each $i \in B$ satisfies the conclusion.
 Now suppose the result holds for every plegma $(t_i)_{i =1}^{n}$ where $|t_n| = k$ and take a plegma $(r_i)_{i =1}^{n}$ with $|r_n| = k+1$. Now take the plegma family $(s_i)_{i =1}^{n}$ defined as
 $s_i = \emptyset$ if  $r_i = \emptyset$ and $s_i = r_i \setminus \{ min(r_i)\}$ if $r_i \neq \emptyset $.  Since $|s_n| = k$ we can apply the induction hypothesis to get $(s'_i)_{i=1}^{n}$ that satisfies the conclusion of the lemma. It is not hard to see that $(r'_i)_{i=1}^{n}$, where $r'_i = s'_i $ if $r_i = \emptyset$ and $r'_i = s'_i \cup \{ min(r_i)\}$ otherwise, has the desired properties.
\end{proof}

Before we state the following lemma we would like to make a comment:

\smallskip

Let $M \in [\mathbb{N}]^{\infty}$ and $n \in \N$. Partition $M$ in $n$-many infinite subsets in the following way
$$
M_j = \{m_i : i \equiv j \ \text{mod}(n) \},
$$
for each $1\leq j \leq n$. It is easy to see that if $\barr{B}$ on $M$ is a  spreading barrier  and  $s_j \in \barr{B}$ is the unique element such that $s_j \sqsubseteq M_j$, for each $1 \leq j \leq n$, then $(s_j)_{j=1}^{n} \in \plegman{\barr{B} \upharpoonright_{M}}{n}$.

\begin{lemma}
\label{plegmaaplegma}
Let $\barr{F}, \barr{G} \subseteq FIN$ be uniform barriers such that $\barr{G}$ is spreading and $o(\barr{F}) \leq o(\barr{G})$ and let  $M,N \in [\mathbb{N}]^{\infty}$. If $L_0 \in [N]^{\infty}$ satisfies the conclusion of Corollary \ref{2.5}, then for every $n \in \N$ and for every  $L \in [L_0]^{\infty}$ there exists $(s_i)_{i=1}^{n} \in \plegman{\barr{G} \upharpoonright L}{n}$ such that $(\psi_{L_0,M}(s_i))_{i=1}^{n} \in \plegman{\barr{F} \upharpoonright M}{n}$.
\end{lemma}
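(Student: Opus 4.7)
The plan is to produce the required $(s_i)_{i=1}^{n}$ by first choosing a plegma family $(t_i)_{i=1}^{n}$ in $\barr{F}\upharpoonright_{M}$ whose $T_{M,L_{0}}$-images land in $L$, and then lifting it coherently to a plegma family in $\barr{G}\upharpoonright_{L}$. Concretely, I would pass to the infinite set $M^{\ast}=T_{M,L_{0}}^{-1}(L)\in[M]^{\infty}$ and apply Lemma \ref{existenciaplegma} with the $n$ barriers all equal to $\barr{F}$ on $M^{\ast}$, obtaining $(t_i)_{i=1}^{n}\in\plegman{\barr{F}\upharpoonright_{M^{\ast}}}{n}\subseteq\plegman{\barr{F}\upharpoonright_{M}}{n}$. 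Setting $u_i=T_{M,L_{0}}(t_i)\subseteq L$, the order-preserving property of $T_{M,L_{0}}$ makes $(u_i)_{i=1}^{n}$ a plegma in $L$, and Theorem \ref{2.4} yields $u_i\in\overline{\barr{G}\upharpoonright_{L_{0}}}^{\sqsubseteq}$. Applying thinness of $\barr{G}$ to the infinite set $u_i\cup(L/\max u_i)\subseteq L$, any $s\in\barr{G}$ with $s\sqsubseteq u_i\cup(L/\max u_i)$ must satisfy $u_i\sqsubseteq s$ (otherwise $s\sqsubsetneq u_i$ would make $s$ a proper initial segment of the known extension of $u_i$ in $\barr{G}\upharpoonright_{L_{0}}$, contradicting the thinness of $\barr{G}$); this upgrades the previous assertion to $u_i\in\overline{\barr{G}\upharpoonright_{L}}^{\sqsubseteq}$.

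The core step is to choose extensions $s_i=u_i\cup v_i\in\barr{G}\upharpoonright_{L}$ so that $(s_i)_{i=1}^{n}$ is plegma. I would construct the tails $v_i\in\barr{G}_{u_i}$ by a recursive procedure that, for each $k=1,2,\dots$, picks $v_1(k)<v_2(k)<\cdots<v_n(k)$ from successive portions of the tail of $L$, respecting for each $i$ the uniformity of the residual barrier $\barr{G}_{u_i}$ (so that the partial tail up to level $k$ can still be completed inside $\barr{G}_{u_i}$) and exploiting the spreading property of $\barr{G}$ (which allows admissible extensions to be shifted to any sufficiently large elements of $L$). Once the $s_i$'s are produced, the uniqueness clause of Corollary \ref{2.5} gives $\psi_{L_{0},M}(s_i)=t_i$, since $T_{M,L_{0}}(t_i)=u_i\sqsubseteq s_i$; therefore $(\psi_{L_{0},M}(s_i))_{i=1}^{n}=(t_i)_{i=1}^{n}$ is the desired plegma in $\barr{F}\upharpoonright_{M}$.

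The main obstacle is the interplay between the possibly distinct lengths $|u_1|\leq|u_2|\leq\cdots\leq|u_n|$ and the plegma interleaving requirement: for $|u_i|<k\leq|u_j|$ with $i<j$, the entry $s_i(k)=v_i(k-|u_i|)$ must interleave with the $k$-th coordinate $s_j(k)=u_j(k)$ of the still-unfinished $u_j$, so the first $|u_n|-|u_i|$ elements of each $v_i$ cannot simply be drawn from the common tail $L/\max\bigcup_{j}u_{j}$ but must be placed in the gaps left by the remaining coordinates of the $u_j$'s with $j>i$. A clean way to handle this is to prepare the $(t_i)$ in $M^{\ast}$ to be sufficiently sparse that after transfer by $T_{M,L_{0}}$ the gaps in $L$ between consecutive coordinates of the $u_j$'s are wide enough to absorb the needed interleaved initial elements of the $v_i$'s; alternatively one can invoke Lemma \ref{plegmagrande} as a preparatory step to enlarge each $u_i$ to a $\tilde u_i\subseteq L$ with $u_i\sqsubseteq\tilde u_i$ and $|\tilde u_1|=\cdots=|\tilde u_n|=|u_n|$ (still in $\overline{\barr{G}\upharpoonright_L}^{\sqsubseteq}$ by the same thinness argument), after which all further extensions live in a single tail of $L$ and the plegma condition on $(s_i)_{i=1}^{n}$ reduces symmetrically to a plegma condition on $(v_i)_{i=1}^{n}$ handled by the residual version of Lemma \ref{existenciaplegma}.
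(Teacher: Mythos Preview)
Your plan is essentially the paper's: produce a plegma $(t_i)$ on the $\barr{F}$-side (the paper does this in $T_{M,L_0}(\barr{F}\upharpoonright_M)\upharpoonright_{L'}$ with $L'=\{l_i\in L:i\equiv 0\pmod n\}$, which simultaneously supplies the gap hypothesis of Lemma~\ref{plegmagrande} that your option~(b) tacitly needs), pad via Lemma~\ref{plegmagrande} to equal-length $t'_i\subseteq L$, and then choose $s_i\in\barr{G}\upharpoonright_L$ above each $t'_i$. The genuine gap is in this last lifting step.

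Your claim that $\tilde u_i\in\overline{\barr{G}\upharpoonright_L}^{\sqsubseteq}$ ``by the same thinness argument'' is not valid: thinness of $\barr{G}$ only rules out $s\sqsubsetneq u_i$ for $s\in\barr{G}$, not the case $u_i\sqsubseteq s\sqsubsetneq\tilde u_i$, and when the latter occurs $\barr{G}_{\tilde u_i}$ is empty, so no ``residual Lemma~\ref{existenciaplegma}'' can be invoked. For a concrete instance take $\barr{F}=\barr{G}=\mathcal{S}_1$ (the Schreier barrier) and $M=L_0=L=\N$: for any plegma $(t_1,t_2)$ with $|t_1|<|t_2|$ one already has $u_1=t_1\in\mathcal{S}_1$, so every proper extension $\tilde u_1\supsetneq u_1$ lies outside $\overline{\mathcal{S}_1}^{\sqsubseteq}$. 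The paper's remedy is not to demand that $s_i$ extend $\tilde u_i$: it partitions the common tail $L''=L/\max t'_n$ into residue classes $L''_i$ and takes $s_i$ to be the unique element of $\barr{G}$ with $s_i\sqsubseteq t'_i\cup L''_i$. Since the infinite sets $t'_i\cup L''_i$ are fully interleaved, the plegma interleaving for $(s_i)$ is automatic; the spreading of $\barr{G}$ then forces $|s_1|\le\cdots\le|s_n|$; and $t_i\sqsubseteq s_i$ (hence $\psi_{L_0,M}(s_i)=T_{M,L_0}^{-1}(t_i)$) follows from exactly the thinness argument you gave for $u_i$, now applied with $t'_i\cup L''_i$ in place of $u_i\cup(L/\max u_i)$.
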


\begin{proof}
  First, we take $L' = \{l_i : i \equiv 0 \ (\text{mod n}) \}$ where $L = \{ l_i : i \in \N \}$. According to Lemma \ref{existenciaplegma}, we can take $(t_i)_{i=1}^{n} \in \plegman{T_{M, L_0}(\barr{F}\upharpoonright M)\upharpoonright L'}{n}$. By applying Lemma \ref{plegmagrande} to $(t_i)_{i=1}^{n}$ we obtain a plegma family  $(t'_i)_{i=1}^{n}$ so that $|t'_1|=|t'_2|= \cdots =|t'_n|$ and $t'_i \in [L]^{< \infty}$, for each $1 \leq i \leq n$. Now,  put $L'' = L / t'_n = \{l_i : i \geq max(t'_n)\}$ and, for each $1\leq i \leq n$, define the set
  $$
  L''_i = \{ l_j : j - max(t'_n) \equiv i \ \text{(mod n)} \}.
  $$
    As $\barr{G}$ is a barrier we can choose $s_i \in \barr{G} \upharpoonright L$ such that $s_i \sqsubseteq t'_i \cup L''_i$. Since  $\barr{G}$ is spreading it follows that $(s_i)_{i=1}^{n} \in \plegman{\barr{G} \upharpoonright L}{n}$. We assert that $\psi_{L_0,M}(s_i) = T^{-1}_{M,L_0}(t_i)$ for each $1 \leq i \leq n$. Indeed, fix $1 \leq i \leq n$. To establish this assertion it is enough, by definition of $\psi_{L_0,M}$, to prove that $T_{M,L_0}(T^{-1}_{M,L_0}(t_i)) = t_i \sqsubseteq s_i$. Notice from the construction that  $s_i \sqsubseteq t'_i \cup L''_i$ and $t_i \sqsubseteq t'_i$. Hence, it follows that either $s_i \sqsubset t_i$ or $t_i \sqsubseteq s_i$. The first relation never holds since, by hypothesis, we can always find $s'_i \in \barr{G}\upharpoonright L_0 $ such that $t_i \sqsubseteq s'_i$, but this would imply $s_i \sqsubset s'_i$ which is impossible because of $\barr{G}$ is a barrier.
\end{proof}

\begin{theorem}
Let $X$ be a Banach space, let $\barr{F}$ be a uniform barrier and let $\matb{x}{\barr{F}\upharpoonright_M}$ be a $\barr{F} \times \omega$-submatrix that generates $(e_i)_{i=1}^{\infty}$ as an asymptotic model. Then there exist a spreading barrier $\barr{B}$ with the same uniformity as $\mathcal{F}$, a $\barr{B} \times \omega$-matrix $\matb{y}{\barr{B}}$ and $N \in [\N]^{\infty}$ such that $\matb{y}{\barr{B} \upharpoonright_{N}}$ generates $(e_i)_{i=1}^{\infty}$ as an asymptotic model .
\end{theorem}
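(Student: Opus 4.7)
The plan is to construct the new matrix by pulling back the given one through the surjection $\psi$ from Corollary \ref{2.5}, and then use Theorem \ref{ramseyanalistas2} together with Lemma \ref{plegmaaplegma} to stabilize the resulting norms along a suitable infinite index set.

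First, I would pick a spreading barrier $\barr{B}$ on $\N$ of the same uniformity as $\barr{F}$; such a barrier exists by the standard theory of uniform barriers (e.g., a suitable generalized Schreier family). Applying Corollary \ref{2.5} to $\barr{F}$ and $\barr{B}$ with the infinite sets $M$ and $\N$ produces $L_0\in[\N]^{\infty}$ and a surjection $\psi:=\psi_{L_0,M}:\barr{B}\upharpoonright_{L_0}\to \barr{F}\upharpoonright_M$. Define the candidate matrix by $y^i_s:=x^i_{\psi(s)}$ for $s\in\barr{B}\upharpoonright_{L_0}$ and $i\in\N$, extending arbitrarily by normalized vectors outside $\barr{B}\upharpoonright_{L_0}$.

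Next, for each $n\in\N$, consider the function $F_n:\plegman{\barr{B}\upharpoonright_{L_0}}{n}\to\EspN{n}$ sending a plegma $(s_i)_{i=1}^n$ to the seminorm $a\mapsto \|\sum_{i=1}^n a_i y^i_{s_i}\|_X$. Since $\EspN{n}$ is compact metric and $\barr{B}$ is spreading, Theorem \ref{ramseyanalistas2} yields an infinite $L^{(n)}\subseteq L_0$ on which $F_n$ converges to some seminorm $\rho_n$. A standard diagonalization then produces a single $N\in[L_0]^{\infty}$ along which $F_n\to\rho_n$ for every $n$, with convergence rates that can be assembled into a sequence $\epsilon'_m\searrow 0$.

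The final step is to identify $\rho_n$ with the seminorm induced by $(e_i)_{i=1}^n$. The key observation is that $T_{M,L_0}$ is position-preserving on the enumerations of $M$ and $L_0$, so if $\min(s)=(l_0)_j$ then $\min(\psi(s))=m_j$; hence pushing plegmas in $\barr{B}\upharpoonright_{L_0}$ far out in $L_0$ automatically pushes their $\psi$-images far out in $M$. Fix $n$ and $\epsilon>0$, and apply Lemma \ref{plegmaaplegma} to a sufficiently deep tail $L\subseteq L_0\cap N$ to obtain a plegma $(s_i)$ in $\barr{B}\upharpoonright_L$ whose image $(\psi(s_i))$ is a plegma in $\barr{F}\upharpoonright_M$, with both families far enough out that the $F_n$-approximation and the original asymptotic-model approximation each have error below $\epsilon/2$. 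Since $F_n((s_i))(a)=\|\sum a_i x^i_{\psi(s_i)}\|_X$, combining the two yields $|\rho_n(a)-\|\sum a_i e_i\|_E|<\epsilon$ for all $a\in[-1,1]^n$. As $\epsilon$ was arbitrary, $\rho_n$ coincides with the norm induced by $(e_i)_{i=1}^n$, and the stabilization of $F_n$ on $N$ is exactly the asymptotic-model condition for $\matb{y}{\barr{B}\upharpoonright_N}$.

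The main obstacle is ensuring this simultaneous control: the single plegma produced by Lemma \ref{plegmaaplegma} must land far out in $\barr{B}\upharpoonright_N$ while its $\psi$-image must land far out in $\barr{F}\upharpoonright_M$. The position-preserving nature of $T_{M,L_0}$ is precisely what makes this double book-keeping tractable; once this is in hand the remaining work is organizational.
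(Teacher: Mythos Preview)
Your approach is correct but differs from the paper's in the order of operations. The paper also defines $y^i_s=x^i_{\psi_{L_0,M}(s)}$ via Corollary \ref{2.5}, but instead of first stabilizing the $F_n$'s to some unknown $\rho_n$ and then identifying it, the paper uses Lemma \ref{plegmaaplegma} together with the Ramsey property of $\plegman{\barr{B}}{n}$ to produce, for each $n$, an infinite $N_{n+1}\subseteq N_n$ on which \emph{every} $n$-plegma of $\barr{B}$ is sent by $\psi_{L_0,M}$ to a plegma of $\barr{F}\upharpoonright_M$; a pseudointersection $N$ of the $N_n$'s then makes the equality $\|\sum a_i y^i_{s_i}\|=\|\sum a_i x^i_{\psi(s_i)}\|$ transfer the asymptotic-model convergence directly from the $x$-matrix to the $y$-matrix, with no separate identification step needed. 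Your route trades that structural Ramsey coloring (which the paper leaves implicit in its phrase ``using Lemma \ref{plegmaaplegma} recursively'') for an application of Theorem \ref{ramseyanalistas2} plus a single witness plegma per $(n,\epsilon)$; the position-preserving property of $T_{M,L_0}$ you invoke is exactly what guarantees that a plegma far out in $L_0$ has $\psi$-image far out in $M$, so the double book-keeping goes through. Both arguments rest on the same three ingredients (Corollary \ref{2.5}, Lemma \ref{plegmaaplegma}, and a Ramsey-type stabilization), and each is a legitimate way to close the loop.
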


\begin{proof}
It is well known that we can find a  spreading uniform barrier $\barr{B}$ with the same uniformity as $\barr{F}$ and let $L_0 \in [\N]^{\infty}$ be as given by Theorem \ref{2.4}. Consider the function  $\psi_{L_0, M} : \barr{B} \upharpoonright_{L_0} \rightarrow \barr{F}$ given by  Corollary \ref{2.5}. By using this function, we define the $\barr{B} \times \omega$-submatrix so that $y^{i}_{s} =x^{i}_{\psi_{L_0, M}(s)}$ for each $s \in \barr{B} \upharpoonright_{L_0}$ and for each $i \in \N$.
To find $N$ first we fix an arbitrary $N_0 \in [L_0]^{\infty}$ and, by using Lemma \ref{plegmaaplegma} recursively, it is possible to choose, for each $i \in \N$, a subset $N_{i + 1} \in [N_i]^{\infty}$ so that $\psi_{L_0, M} \upharpoonright_{N_{i+1}}$ sends an element $(s_j)_{j =1}^{i+1} \in \plegman{\barr{B}\upharpoonright_{N_{i+1}}}{i+1}$ to $(\psi_{L_0, M}(s_j))_{j =1}^{i+1} \in \plegman{\barr{F} \upharpoonright_M}{i+1}$.
 Now let $N$ be a pseudointersection of the $N_i$'s.
Notice that, by definition, for every $(s_i)_{i=1}^{n} \in \plegman{\barr{B} \upharpoonright_N}{n}$ we have
$$
\norm{\sum_{i=1}^{n}a_i x^{i}_{\psi_{L_0, M}(s_i)}} = \norm{\sum_{i=1}^{n} a_i y^{i}_{s_i}},
$$
for each $(a_i)_{i=1}^{n} \in [-1,1]^{n}$. From this assertion it is  easy to see that the submatrix $(y^{i}_{s})_{s \in \barr{B} \upharpoonright_N, i \in \N}$ generates $(e_i)_{i=1}^{\infty}$  as an asymptotic model.
\end{proof}

Our next task is to  prove that if $\barr{B}$ is a uniform barrier, then every $\barr{B} \times \N$-matrix has a submatrix that, in some way, converges to a seminorm on $c_{00}$. In order to do this we need to introduce a function:

 \begin{definition}  Let $X$ be a Banach space and let $\barr{B}$ be a  barrier. For a  $\barr{B} \times \N$-matrix $\matb{x}{B}$, we define
  \begin{align*}
 \asyf{n} : \plegman{\barr{B}}{n} &\rightarrow \EspN{n}\\
 s &\rightarrow \asyf{n}(s)
 \end{align*}
 where  $\asyf{n}(s)(\summa{n}{a}{e}) = \norma{\summaa{n}{a}{x}{s}}$ for all $a_1,\cdots, a_n \in \mathbb{R}$.
 \end{definition}

That this seminorm $\asyf{n}(s)$ is indeed an element of $\EspN{n}$ for each $s  \in \plegman{\barr{B}}{n}$ follows from the normalized condition in the entries of the matrix.
  Observe that  the function $\asyf{n}$ depends on the given matrix, but since it will always be clear from context to which matrix we are referring to. Thus,  this function will  be always written  by   $\asyf{n}$ without mentioning the matrix.

\medskip

Before we state the next lemma, we recall that two seminorms $\rho_n \in \mathcal{N}_n$ and $\rho_m \in \mathcal{N}_m$, with $m>n$ are called compatible if $\rho_m \upharpoonright_{\mathbb{R}^{n}} = \rho_n $.

  \begin{theorem} \label{exisasym} Let $X$ be a Banach space, $\barr{B}$  a  barrier and   $\matb{x}{\barr{B}}$  a  $\barr{B}\times \N$-matrix in $X$. Then for every $N \in [\N]^{\infty}$ there exist  $M \in [N]^{\infty}$ and $\rho_n \in \EspN{n}$, for each  $n \in \mathbb{N}$, such that  the seminorms $\{ \rho_n: n \in \N \}$ are pairwise compatible and the function $\asyf{n}: \plegman{\barr{B} \upharpoonright_M}{n} \rightarrow \EspN{n} $ converges to  $\rho_n$, for every $n \in \mathbb{N}$.
 \end{theorem}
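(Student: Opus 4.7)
The plan is to apply Theorem \ref{ramseyanalistas2} once for each value of $n$, then diagonalize to a single infinite set $M$ on which convergence holds at every level, and finally verify pairwise compatibility of the resulting seminorms by exploiting that a plegma family of size $n+1$, when truncated to its first $n$ entries, is still plegma.

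For the construction, fix $\epsilon_l \searrow 0$ and set $N_0 = N$. Having built $N_{n-1} \in [N]^{\infty}$, apply Theorem \ref{ramseyanalistas2} to the compact metric space $(\EspN{n},d_n)$, the restricted barrier $\barr{B}\upharpoonright_{N_{n-1}}$, the function $\asyf{n}$, and the sequence $(\epsilon_l)_l$, to obtain $N_n \in [N_{n-1}]^{\infty}$ and $\rho_n \in \EspN{n}$ such that $\asyf{n}$ converges to $\rho_n$ on $\barr{B}\upharpoonright_{N_n}$. Choose $m_n \in N_n$ with $m_1 < m_2 < \cdots$ and set $M = \{m_n : n \in \N\}$. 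Since $N_n \supseteq N_{n+1} \supseteq \cdots$, one has $M/m_{n-1} \subseteq N_n$ for every $n$, so the remark following Theorem \ref{ramseyanalistas} guarantees that $\asyf{n}$ still converges to $\rho_n$ on $\barr{B}\upharpoonright_M$.

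For pairwise compatibility, the crucial observation is that if $(s_i)_{i=1}^{n+1}$ is plegma then so is $(s_i)_{i=1}^{n}$ (immediate from Definition \ref{plegma}), and that
\[
\asyf{n+1}\big((s_i)_{i=1}^{n+1}\big)(a_1,\ldots,a_n,0) = \asyf{n}\big((s_i)_{i=1}^{n}\big)(a_1,\ldots,a_n),
\]
because setting $a_{n+1}=0$ deletes the last summand of $\sum_{i} a_i x^{i}_{s_i}$. Lemma \ref{existenciaplegma} supplies, for every $l$, a plegma family of size $n+1$ in $\barr{B}\upharpoonright_{M/m_l}$; applying the two convergence statements simultaneously to it yields $|\rho_{n+1}(a,0)-\rho_n(a)| < 2\epsilon_l$ for every $a \in [-1,1]^n$, and letting $l \to \infty$ followed by homogeneity gives $\rho_{n+1}|_{\R^n} = \rho_n$.

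The main difficulty lies in coordinating a single $M$ that preserves convergence at every level $n$ while simultaneously forcing the $\rho_n$'s to be compatible; the recursive-diagonal scheme handles the first issue, and the plegma-truncation identity displayed above, together with the existence of size-$(n+1)$ plegma families with arbitrarily large minimum, handles the second.
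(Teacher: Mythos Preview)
Your proposal is correct and follows essentially the same approach as the paper: a recursive application of Theorem~\ref{ramseyanalistas2} to obtain nested sets $N_n$ and seminorms $\rho_n$, a diagonalization to a single $M$, the remark after Theorem~\ref{ramseyanalistas} to retain convergence on $M$, and then the plegma-truncation observation together with Lemma~\ref{existenciaplegma} to establish compatibility. The only cosmetic difference is that the paper checks $\rho_m\!\upharpoonright_{\mathbb{R}^n}=\rho_n$ directly for arbitrary $n<m$, whereas you do the consecutive case $n,n+1$ and implicitly appeal to transitivity; both are fine.
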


 \begin{proof} The set $M$ is going to be constructed recursively by applying Theorem~\ref{ramseyanalistas2}. Indeed, for $n=1$ we obtain $M_{1} \in [N]^\infty$ and a seminorm $\rho_1$ such that the function $\asyf{1}$ converges to  $\rho_1$ on $M_1$. Now, for every  $1 < n \in \N $,  we obtain $M_{n} \in [M_{n-1}]^\infty$ and a seminorm $\rho_n$ such that the function $\asyf{n}$ converges to  $\rho_n$ on $M_n$. Now, recursively, take $m_1 = \min(M_1)$ and $m_{n+1} = \min(M_{n+1} / m_n)$ for each $n \in \N$. Then, we define $M = \{m_n : n \in \mathbb{N} \}$.  It follows from the remark right after Theorem~\ref{ramseyanalistas} that $\asyf{n}$ converges to $\rho_n$ on $M$ for all $n \in \N$. To see that the seminorms $\{ \rho_n: n \in \N \}$ are indeed pairwise compatible notice that if  $n<m$, then $(s_i)_{i = 1}^{n} \in \plegman{\barr{B} \upharpoonright_ M}{n}$ for every $(s_i)_{i =1}^{m} \in \plegman{\barr{B} \upharpoonright_ M}{m}$. Hence, it follows that
 \begin{align*}
 d_{n}(\rho_n, \rho_m\upharpoonright_{\mathbb{R}^{n}}) &\leq d_n(\rho_n,\Psi^{n}((s_i)_{i =1}^{n})) + d_n(\Psi^{m}((s_i)_{i=1}^{m})) \upharpoonright_{\mathbb{R}^{n}}, \rho_m \upharpoonright_{\mathbb{R}^{n}}) \\
 &\leq d_n(\rho_n, \Psi^{n}((s_i)_{i=1}^{n})) + d_m(\Psi^{m}((s_i)_{i=1}^{ m})), \rho_m),
 \end{align*}
 for each $(s_i)_{i =1}^{m} \in \plegman{\barr{B} \upharpoonright_ M}{m}$. The conclusion follows from the fact that the functions $\Psi^{n}$ and $\Psi^{m}$ converge to $\rho_n$ and $\rho_m$, respectively, on $M$ and from Lemma~\ref{existenciaplegma}.
 \end{proof}

According to Theorem \ref{exisasym}, if $\matbr{x}{\barr{B}}$ is an $\barr{B}\times \N$-matrix on a Banach space $X$, it then follows from  Theorem \ref{exisasym} that there exists a submatrix $\matbr{x}{\barr{B}\upharpoonright_M}$ and a sequence $\suc{\rho}$ of compatible seminorms such that all the $\asyf{n}$'s are converging to  $\rho_n$ on $M$. This allows us to define a  seminorm $\rho = \bigcup_{n \in \mathbb{N}} \rho_n$ on the vector space $c_{00}(\N)$. In the case that $\rho$ is a norm on $c_{00}(\N)$, this normed space is the {\it asymptotic model generated} by the submatrix $\matbr{x}{\barr{B} \upharpoonright_M }$. Evidently, Theorem \ref{exisasym} extends the original result of A. Brunel and L. Sucheston \cite{bs} concerning spreading models.

\medskip

As we pointed after Definition 2.2,  given a uniform barrier $\barr{B}$ and $M \in [\N]^{\infty}$, the function $T_{M,\N} : \barr{B} \rightarrow FIN$ preserves plegma sequences and sends $\barr{B}$ to an uniform barrier of the same uniformity. Thus, it is  possible to obtain $\xi$-asymptotic models by using matrices instead of using submatrices.
  Notice that every basic sequence of $X$ is an element of $\MA{\xi}{X}$ for every non-zero ordinal number $\xi < \omega_1$; thus, most of the Banach spaces with Schauder basis admit more asymptotic models than spreading models. Now, we denote by $\SP{\xi}{X}$ the set of all $\xi$-asymptotic models generated by $\barr{B} \times \N$-matrices with the property that $x^{n}_{s} = x^{m}_s$ for each $s \in \barr{B}$ and $n,m \in \mathbb{N}$. It is easy to see that $\SP{\xi}{X}$ is exactly the set of all $\xi$-spreading models as were defined in the paper \cite{akt}.

\medskip

In the next theorem, we shall see that both spreading and asymptotic models are closely related one to the other.

\begin{lemma}
\label{subsucesion} Let $X$ be a Banach space and let  $0 < \xi < \omega_1$.
If $\suc{f} \in \MA{\xi}{X}$, then $(f_i)_{i \in M} \in \MA{\xi}{X}$ for every $M \in [\N]^{\infty}$.
\end{lemma}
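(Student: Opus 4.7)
The plan is to construct, from the matrix generating $(f_i)_{i\in\N}$, a new matrix in $X$ that generates $(f_i)_{i \in M}$ as a $\xi$-asymptotic model. Let $(x^i_s)_{s \in \barr{B}, i \in \N}$ be a $\barr{B}\times\N$-matrix in $X$ generating $(f_i)$ with error sequence $\epsilon_m \searrow 0$, where $\barr{B}$ is a $\xi$-uniform barrier; by the theorem preceding this lemma, we may assume $\barr{B}$ is spreading. Writing $M = \{m_1 < m_2 < \cdots\}$, define the new $\barr{B}\times\N$-matrix $(y^i_s)_{s \in \barr{B}, i \in \N}$ in $X$ by
$$ y^i_s := x^{m_i}_s, $$
so the $i$-th row of $(y^i_s)$ coincides with the $m_i$-th row of $(x^i_s)$.

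The core combinatorial step is the following: given a plegma $(s_i)_{i=1}^n$ in $\barr{B}$, produce an extended plegma $(\tilde{s}_j)_{j=1}^{m_n}$ in $\barr{B}$ satisfying $\tilde{s}_{m_i} = s_i$ for each $i=1,\ldots,n$. Granting this, set $b_j := a_i$ when $j=m_i$ and $b_j := 0$ otherwise; then
$$ \sum_{j=1}^{m_n} b_j x^j_{\tilde{s}_j} \;=\; \sum_{i=1}^n a_i y^i_{s_i}, \qquad \sum_{j=1}^{m_n} b_j f_j \;=\; \sum_{i=1}^n a_i f_{m_i}, $$
and applying the asymptotic model property of $(x^i_s)$ to $(\tilde{s}_j)_{j=1}^{m_n}$ yields
$$ \Bigl|\, \|{\textstyle\sum_{i=1}^n a_i y^i_{s_i}}\| - \|{\textstyle\sum_{i=1}^n a_i f_{m_i}}\| \,\Bigr| < \epsilon_{\tilde{s}_1(1)}. $$
Since $\tilde{s}_1(1) \to \infty$ as the starting point of $(s_i)_{i=1}^n$ tends to infinity, this is the approximation required to conclude that (the appropriate submatrix of) $(y^i_s)$ generates $(f_{m_i})_{i \in \N}$ as a $\xi$-asymptotic model.

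The main obstacle is the plegma extension at prescribed positions: an arbitrary plegma in $\barr{B}$ does not admit such an extension, because the interleaving conditions force the consecutive values $s_i(k)$ and $s_{i+1}(k)$ to be at least $m_{i+1}-m_i$ apart in order to accommodate the $m_{i+1}-m_i-1$ intermediate fillers, with an analogous constraint of room before $\tilde{s}_{m_1} = s_1$. To bypass this, we restrict to the submatrix $(y^i_s)_{s \in \barr{B}\upharpoonright_N, i \in \N}$ for $N \in [\N]^{\infty}$ chosen sparsely enough, for instance with the consecutive gaps of $N$ diagonally dominating the differences $m_{i+1}-m_i$; any plegma in $\barr{B}\upharpoonright_N$ starting sufficiently far out then automatically leaves the required room between its coordinates, and the filler elements $\tilde{s}_j$ for $j \notin \{m_1,\ldots,m_n\}$ are produced in $\barr{B}$ itself by exploiting its spreading property, which guarantees that any appropriately shifted coordinate sequence extends to an element of $\barr{B}$.
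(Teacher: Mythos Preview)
Your construction $y^i_s := x^{m_i}_s$ is exactly the paper's, and the paper's proof is essentially just that one line together with ``it is not hard to see''. So at the level of approach you match the paper.

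Where you diverge is in the verification. You try to show directly that every short plegma $(s_i)_{i=1}^{n}$ in $\barr{B}\upharpoonright_N$ extends to a long plegma $(\tilde s_j)_{j=1}^{m_n}$ in $\barr{B}$ with $\tilde s_{m_i}=s_i$, and you invoke sparseness of $N$ plus the spreading property of $\barr{B}$ to manufacture the fillers. This is the hard direction, and your sketch has a soft spot: spreading only promises barrier elements when coordinates are shifted \emph{upward}, whereas the fillers with $j<m_1$ must have first coordinate \emph{below} $s_1(1)$; and even once the interleaved coordinate values are chosen, you still have to argue that the resulting barrier elements have lengths satisfying $|\tilde s_1|\le\cdots\le|\tilde s_{m_n}|$, which spreading by itself does not give you. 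This can be pushed through by an induction on the uniformity in the style of Lemma~\ref{existenciaplegma}, but it is more work than the one-line proof suggests.

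The verification the paper presumably has in mind goes the easy direction. A subfamily of a plegma (in increasing index order) is again a plegma, so one does not extend---one restricts. Concretely: apply Theorem~\ref{exisasym} to $(y^i_s)$ to pass to a submatrix converging to some seminorm $\rho$; then, for each $n$, pick by Lemma~\ref{existenciaplegma} a long plegma $(t_j)_{j=1}^{m_n}$ far out in that submatrix, set $b_{m_i}=a_i$ and $b_j=0$ otherwise, and observe
\[
\Big\|\sum_{i=1}^{n} a_i y^{i}_{t_{m_i}}\Big\|
=\Big\|\sum_{j=1}^{m_n} b_j x^{j}_{t_j}\Big\|
\approx\Big\|\sum_{j=1}^{m_n} b_j f_j\Big\|
=\Big\|\sum_{i=1}^{n} a_i f_{m_i}\Big\|,
\]
which identifies $\rho$ with the $(f_{m_i})$-norm. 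This is exactly the manoeuvre used in the proof of Theorem~\ref{primero}(2) and avoids the plegma-extension problem entirely.
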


\begin{proof}
Take a submatrix   $\matbr{x}{\barr{B}\upharpoonright_{N}}$ of a $\barr{B}\times \N$-matrix on $X$  that generates $\suc{f}$ as a $\xi$-asymptotic model. Now consider  the $\barr{B}\times \N$-matrix $(y^i_s)_{s \in \barr{B}, i \in \N}$ with entries $y_{s}^{i} = x_{s}^{m_i}$ if $s \in \barr{B}$ and $i \in \N$ , where $\{m_i : i \in \mathbb{N} \}$ is the increasing numeration of $M$. It is not hard to see that the submatrix $\matbr{y}{\barr{B}\upharpoonright_{N}}$ generates $(f_i)_{i \in M}$ as an asymptotic model.
\end{proof}

  \begin{theorem}
 \label{primero} For a non-zero ordinal $\xi$ we have the following properties:
 \begin{enumerate}
 \item $\SP{\xi}{X} \subseteq \MA{\xi}{X}$.

 \item If $\suc{e}$ is a spreading model generated by a subsequence of an element of $\MA{\xi}{X}$, then $\suc{e} \in \SP{\xi +1}{X}$.
 \end{enumerate}
 \end{theorem}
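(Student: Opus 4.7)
Part (1) is immediate from the definitions: every element of $\SP{\xi}{X}$ is, by construction, generated as a $\xi$-asymptotic model by a constant-row $\barr{B}\times\N$-matrix, and so belongs to $\MA{\xi}{X}$.

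For Part (2), start from some $(f_i)_{i\in\N}\in\MA{\xi}{X}$ whose subsequence $(f_{i_k})_k$ generates $(e_i)$ as a spreading model. Lemma~\ref{subsucesion} lets us replace $(f_i)$ by that subsequence and assume $(f_i)$ itself generates $(e_i)$ as a spreading model. The preceding theorem on spreading-barrier representatives then supplies a $\xi$-uniform \emph{spreading} barrier $\barr{B}$ on $\N$, a $\barr{B}\times\N$-matrix $(x^i_s)_{s\in\barr{B},\,i\in\N}$, and $M\in[\N]^{\infty}$ such that $(x^i_s)_{s\in\barr{B}\upharpoonright_M,\,i\in\N}$ generates $(f_i)$ as a $\xi$-asymptotic model.

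Set $\barr{C}=\{\{n\}^{\frown} s:n\in\N,\ s\in\barr{B},\ n<s\}$. A direct check shows that $\barr{C}_{\{n\}}=\barr{B}\upharpoonright_{\N/n}$ is $\xi$-uniform on $\N/n$, so $\barr{C}$ is $(\xi+1)$-uniform, and $\barr{C}$ inherits the spreading property from $\barr{B}$. Define a $\barr{C}\times\N$-matrix $(y^i_t)_{t\in\barr{C},\,i\in\N}$ by
$$
y^i_t = x^{\min(t)}_{\,t\setminus\{\min(t)\}}.
$$
Since $y^i_t$ does not depend on $i$, this matrix has the constant-row property required for membership in $\SP{\xi+1}{X}$. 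Apply Theorem~\ref{exisasym} inside $M$ to produce $N\in[M]^{\infty}$ and compatible seminorms $\rho_k\in\EspN{k}$ such that the functions $\asyf{k}$ associated with $(y^i_t)$ converge to $\rho_k$ on $\plegman{\barr{C}\upharpoonright_N}{k}$. It now suffices to identify $\rho_k(\sum a_j e_j)$ with $\|\sum a_j e_j\|_E$. For a plegma $(t_j)_{j=1}^{k}\in\plegman{\barr{C}\upharpoonright_N/m}{k}$ with $m$ large, writing $t_j=\{n_j\}^{\frown} s_j$ yields $m<n_1<\cdots<n_k$ and a plegma $(s_j)$ in $\barr{B}\upharpoonright_M$ with very large minimum, and
$$
\Bigl\|\sum_{j=1}^{k} a_j y^j_{t_j}\Bigr\|_X \;=\; \Bigl\|\sum_{j=1}^{k} a_j x^{n_j}_{s_j}\Bigr\|_X.
$$
The $\xi$-asymptotic model property, applied via Lemma~\ref{subsucesion} to the row-permuted matrix $(x^{n_j}_{\cdot})_j$, approximates the right-hand side by $\|\sum a_j f_{n_j}\|$, and the spreading-model hypothesis then approximates that by $\|\sum a_j e_j\|_E$ as soon as $n_1\geq k$. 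Both approximations are controlled by $m$, giving $\rho_k(\sum a_j e_j)=\|\sum a_j e_j\|_E$ and hence $(e_i)\in\SP{\xi+1}{X}$.

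The main obstacle is making the first approximation uniform in the row choice $(n_j)$: Lemma~\ref{subsucesion} supplies, for each fixed subsequence, an error tending to zero in $\min s_j(1)$, but a priori the rate depends on the subsequence. The remedy is a standard diagonalisation folded into the selection of $N$ via Theorem~\ref{ramseyanalistas2}, applied to the plegma family $\plegman{\barr{C}}{k}$ with values in the compact space $\EspN{k}$; this forces the error to depend only on $m=\min(t_1)$, which closes the argument.
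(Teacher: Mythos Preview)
Your overall architecture matches the paper's proof exactly: pass to the subsequence via Lemma~\ref{subsucesion}, form the $(\xi+1)$-uniform barrier $\barr{C}=\N\oplus\barr{B}$, define the constant-row matrix $y^i_{\{n\}^\frown s}=x^n_s$, apply Theorem~\ref{exisasym} to obtain a limit seminorm, and then identify that limit with $\|\cdot\|_E$. (The detour through a spreading representative for $\barr{B}$ is not needed, but it is harmless.)

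The gap is in the identification step. You try to bound
\[
\Bigl|\,\Bigl\|\sum_{j=1}^{k}a_j x^{n_j}_{s_j}\Bigr\|_X-\Bigl\|\sum_{j=1}^{k}a_j f_{n_j}\Bigr\|\,\Bigr|
\]
for an \emph{arbitrary} plegma $(t_j)=(\{n_j\}^\frown s_j)$ in $\barr{C}\upharpoonright_N/m$, invoking Lemma~\ref{subsucesion} on the row-permuted matrix and then proposing to cure the non-uniformity in $(n_j)$ by ``a standard diagonalisation folded into the selection of $N$ via Theorem~\ref{ramseyanalistas2}''. But that application of Theorem~\ref{ramseyanalistas2} to $\plegman{\barr{C}}{k}\to\EspN{k}$ is precisely Theorem~\ref{exisasym}, which you already used; it tells you only that $\asyf{k}$ converges to \emph{some} $\rho_k$, not that $\rho_k=\|\cdot\|_E$. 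No diagonalisation over the uncountably many row choices $(n_j)$ will make the bound above uniform, because for a generic $k$-plegma $(s_j)$ in $\barr{B}$ there is no length-$n_k$ plegma in $\barr{B}\upharpoonright_N$ with $s_j$ sitting in the $n_j$-th slot, so the original asymptotic-model estimate cannot be invoked.

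The paper sidesteps this entirely by reversing the order of choices. Since $\asyf{k}$ already converges on $N$, to identify $\rho_k$ it suffices to exhibit, for each $\epsilon>0$ and each $m\in N$, \emph{one} plegma $(t_j)\in\plegman{\barr{C}\upharpoonright_{N/m}}{k}$ with $\bigl|\|\sum a_j y^j_{t_j}\|-\|\sum a_j e_j\|_E\bigr|<\epsilon$. The paper builds such a plegma: first choose $n_1<\cdots<n_k$ in $N/m$ so that the spreading-model error is $<\epsilon/2$; then choose a plegma $(s_i)_{i=1}^{n_k}$ of length $n_k$ (not $k$) in $\barr{B}\upharpoonright_{N/n_k}$, which lets you apply the $\xi$-asymptotic-model estimate with $n=n_k$ directly and then set the unused coefficients to zero; finally set $t_j=\{n_j\}^\frown s_{n_j}$. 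This eliminates the uniformity problem altogether, and is the missing idea in your argument.
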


 \begin{proof}  $(1)$.  This follows from the definition of $\xi$-spreading model.

$(2)$. In virtue of Lemma~\ref{subsucesion}, we may begin by taking $\suc{f} \in \MA{\xi}{X}$ that generates $\suc{e}$ as a spreading model without going to a subsequence. Choose a  $\barr{B}\times \N$-submatrix $\matbr{x}{\barr{B}\upharpoonright_{N}}$ that generates $\suc{f}$ as a $\xi$-asymptotic model where $\mathcal{B}$ is a $\xi$-uniform barrier and $N \in [\N]^\infty$. Define the $(\mathbb{N} \oplus \barr{B}) \times \N$-matrix such that $x_{l ^\frown s}^{i} = x^{l}_{s}$ for $i \in \N$ and $l ^\frown s \in \mathbb{N} \oplus \barr{B}$. According to  Theorem~\ref{exisasym},
there is $M \in [N]^{\infty}$ such that $\asyf{n}$ converges on $M$, for every $n \in \N$, and denote the corresponding spreading model by $(h_i)_{i \in \N}$. We will show that this  $(\xi +1)$-spreading model is actually $\suc{e}$. In fact, this assertion follows from the next claim:

\medskip

{\bf Claim:}
For every $k \in \N$,  $m \in M$ and  $\epsilon > 0$ there exists a plegma sequence $(n_i^{\frown} s_i)_{i=1}^{k} \in \plegman{\barr{B} \upharpoonright_{M / m} }{k}$  such that
  $$
  \left|\norm{\summa{k}{a}{e}}_E - \norm{{\sum_{i = 1}^{k} a_i x^{i}_{n_i ^\frown s_i} }}_X\right| < \epsilon,
  $$
for every $(a_i)_{i=1}^{k} \in [-1,1]^{k}$.

\medskip

{\it Proof of the Claim:} Fix $k \in \mathbb{N}$, $m \in M$ and $\epsilon > 0$. Choose $\{n_1, n_2,...,n_k\} \in [M / m]^{k}$ such that
  $$
  \left| \norm{\sum_{i=1}^{k} a_i e_i}_E - \norm{\sum_{i=1}^{k} a_i f_{n_i}}_F \right| < \frac{\epsilon}{2},
  $$
  for every $(a_i)_{i=1}^{k} \in [-1,1]^{k}$. Now, by using asymptotic convergence and Lemma~\ref{existenciaplegma}, we may take a plegma sequence $(s_i)_{i=1}^{n_k} \in \plegman{\barr{B}\upharpoonright_{M/ n_k}}{n_k}$ such that
  $$
  \left| \norm{\sum_{j=1}^{n_k} b_j f_{j}}_F - \norm{\sum_{j=1}^{n_k} b_j x^{j}_{s_j}}_X \right| < \frac{\epsilon}{2}
  $$
 for each $(b_j)_{j=1}^{n_k} \in [-1,1]^{n_k}$. Fix  $(a_i)_{i=1}^{k} \in [-1,1]^{k}$ and consider the  sequence $b_{n_i} = a_i$, for each $1 \leq i \leq k$, and $b_j = 0$ for $j \notin \{n_1,...,n_k\}$. Then,  $(n_i ^\frown s_{n_i})_{i=1}^{k} \in \plegman{(\N \oplus \barr{B}) \upharpoonright_{M}}{k}$ satisfies  that

 \begin{align*}
 &\left|\norm{\summa{k}{a}{e}}_E - \norm{{\sum_{i = 1}^{k} a_i x^{i}_{n_i ^\frown s_i} }}_X\right|  \\
 &= \left|\norm{\summa{k}{a}{e}}_E - \norm{\summaaa{k}{a}{x}{s}{n}}_X\right|  \\
  &\leq \left|\norm{\summa{k}{a}{e}}_E -\norm{\summasa{k}{a}{f}{n}}_F  \right| + \left| \norm{\summasa{k}{a}{f}{n}}_F - \norm{\summaaa{k}{a}{x}{s}{n}}_X\right| \\
  &\leq \left|\norm{\summa{k}{a}{e}}_E -\norm{\summasa{k}{a}{f}{n}}_F  \right| + \left| \norm{\sum_{j=1}^{n_k} b_j f_{j}}_F - \norm{\sum_{j=1}^{n_k} b_j x^{j}_{s_j}}_X \right| \\
  &\leq \frac{\epsilon}{2} + \frac{\epsilon}{2} = \epsilon.
 \end{align*}

\smallskip

 The result follows from the inequality

$$
\left|\norm{\summa{k}{a}{e}}_E  - \norm{\summa{k}{a}{h}}_H \right| \leq
 $$
 $$
 \left|\norm{\summa{k}{a}{e}}_E - \norm{{\sum_{i = 1}^{k} a_i x^{i}_{n_i ^\frown s_i} }}_X  \right| - \left| \norm{{\sum_{i = 1}^{k} a_i x^{i}_{n_i ^\frown s_i} }}_X - \norm{\summa{k}{a}{h}}_H \right|
$$

and by using the claim and the convergence of $(x_{l ^\frown s}^{i})_{l ^\frown s \in (\N \oplus \barr{B})/M , i \in N}$ to choose an appropriate plegma sequence $(n_i^{\frown} s_i)_{i=1}^{k} \in \plegman{\barr{B} \upharpoonright_{M}}{k}$.
 \end{proof}

Now, we generalize the notion of a block sequence, in the context of $\barr{B} \times \N$-matrices, following ideas from \cite{akt2}.

\begin{definition}
Let $X$ be a Banach space with Schauder basis $\suc{e}$. A  $\barr{B} \times \N$-matrix $\matbr{x}{\barr{B}}$ on $X$ is called a {\it plegma block} $\barr{B} \times \N$-{\it matrix} if all its entries are block vectors of $\suc{e}$ and for every $n \in \N$ there exists an $m \in \N$ such that for each $(s_i)_{i=1}^{n} \in \plegman{\barr{B} / m}{n}$ the sequence $(x^{i}_{s_i})_{i=1}^{n}$ is a block subsequence of $\suc{e}$.
\end{definition}

The asymptotic model version of Theorem $42$ from \cite{akt2} is stated in the following theorem. To prove it we shall need the next two lemmas.

  \begin{definition}
  Let $\barr{B}$ be a uniform barrier, $n \in \N$ and $s \in \barr{B}$ such that $n < s$. A sequence $(r_i)_{i=1}^{n} \in [[\N]^{< \infty}]^{n}$ is called a $\plegman{\barr{B}}{n}$-{\it decomposition} of $s$ if  $r_i$ is the unique element of $\barr{B}$ satisfying $r_i \sqsubseteq s - (n - i)$, for every $1 \leq i \leq n$, and $(r_i)_{i=1}^{n} \in \plegman{\barr{B}}{n}$.
  \end{definition}

In the next, lemma we shall see that we can find plegma families whose elements have suitable decompositions with very strong combinatorial properties.

  \begin{lemma}
  \label{plegmasuma}
  Let $\barr{B}$ be a uniform spreading barrier and $l, n, m_1, m_2, \cdots, m_n$  elements of $\N$. Then for every $M \in [\N]^{\infty}$
  there exists $N \in [M]^{\infty}$ such that each $(s_i)_{i=1}^{n} \in \plegman{\barr{B}\upharpoonright_{N}}{n}$ satisfies that  $s_i$ has a $\plegman{\barr{B}}{m_i}$-decomposition $(r^{i}_j)_{j=1}^{m_i}$, for every $1 \leq i \leq n$,  and
  $$
  l < (r^{1}_j)_{j=1}^{m_1} {}^\frown  \cdots {}^\frown(r^{n}_j)_{j=1}^{m_n} \in \plegman{\barr{B}}{\sum_{i=1}^{n} m_i}.
  $$
  \end{lemma}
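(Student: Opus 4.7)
I would begin by replacing $M$ with $M/l$, so that the requirement $l<\min\bigcup_{i,j}r^{i}_j$ becomes automatic: every $r^{i}_j$ is a subset of some $s_i\in\barr{B}\upharpoonright_N\subseteq[N]^{<\infty}$, and $N\subseteq M/l$ forces every element to exceed $l$. Set $m=\sum_{i=1}^n m_i$. The strategy is to construct $N\in[M]^{\infty}$ recursively, sparsely inside $M$, so that any $s_i\in\barr{B}\upharpoonright_N$, viewed inside the ambient $M$, automatically admits its $\plegman{\barr{B}}{m_i}$-decomposition, and so that the resulting concatenation is a plegma family in $\barr{B}$ of length $m$.

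To carry out the recursion, I would pick $N=\{n_1<n_2<\cdots\}$ step by step inside $M$. Having fixed $n_1,\ldots,n_k$, I would select $n_{k+1}\in M/n_k$ by invoking Lemma~\ref{util} applied to $m$ copies of $\barr{B}$, ensuring that the uniformities of the shifted barriers $\barr{B}_{\{n\}}$ align appropriately. Then, applying Lemma~\ref{existenciaplegma} to $(\barr{B},\ldots,\barr{B})$ restricted to the remaining tail of $M$, I can guarantee that the tail still supports a plegma family of length $m$ in $\barr{B}$. Lemma~\ref{plegmagrande} would then be invoked to pad the lengths of incomplete initial pieces so that the sizes of the pieces in the eventual concatenation $(r^{1}_j)_{j=1}^{m_1}{}^\frown\cdots{}^\frown(r^{n}_j)_{j=1}^{m_n}$ are non-decreasing, which is the first of the two plegma conditions. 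The second plegma condition---the value-interleaving---would be driven by the plegma structure of $(s_i)_{i=1}^n$ combined with the spreading property of $\barr{B}$, which ensures that initial segments of suffixes remain inside $\barr{B}$ while respecting the correct ordering of positions.

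The main obstacle is to arrange, simultaneously and uniformly over all plegma families $(s_i)_{i=1}^n\in\plegman{\barr{B}\upharpoonright_N}{n}$, that the concatenated decomposition is itself plegma. Because the individual $r^{i}_j$ are determined by the $s_i$, yet must satisfy non-trivial combinatorial constraints tying together different indices $i$ and $j$, cleanly transferring the plegma structure of $(s_i)_{i=1}^n$ to the refined concatenation likely calls for a simultaneous induction on $n$ and on the uniformity of $\barr{B}$, following the inductive scheme already set by Theorem~\ref{2.4} and Lemma~\ref{existenciaplegma}. The sparseness of $N$ inside $M$, produced by the recursive construction, is precisely the feature that makes this alignment possible: it leaves enough room between consecutive elements of $N$ for the auxiliary plegma of length $m$ provided by Lemma~\ref{existenciaplegma} to slot into the gaps and witness the required decomposition.
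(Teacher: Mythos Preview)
Your intuition that $N$ must be chosen sparsely inside $M$ is exactly right, but the proposed machinery misses the point because you never unpack the definition of a $\plegman{\barr{B}}{m_i}$-decomposition. By that definition the pieces $r^{i}_j$ are \emph{not} chosen: $r^{i}_j$ is the unique element of $\barr{B}$ with $r^{i}_j\sqsubseteq s_i-(m_i-j)$, an arithmetic left-shift of $s_i$ by $m_i-j$. Consequently there is nothing for Lemma~\ref{plegmagrande} to ``pad'', and Lemmas~\ref{util} and~\ref{existenciaplegma}---which concern uniformities and the mere existence of some plegma---say nothing about whether these particular shifted initial segments interleave. The simultaneous induction on $n$ and on the uniformity of $\barr{B}$ that you anticipate is likewise unnecessary: the uniformity plays no role once the decomposition is written out.

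With the shift definition in hand the lemma becomes pure arithmetic. The interleaving condition on the concatenation $(r^{1}_j)_{j=1}^{m_1}{}^\frown\cdots{}^\frown(r^{n}_j)_{j=1}^{m_n}$ reduces to inequalities of the form $s_{i+1}(p)-s_i(p)\geq m_{i+1}$ and $s_1(p+1)-s_n(p)\geq m_1$, the length condition follows from the spreading hypothesis, and the requirement $l<r^{1}_1$ amounts to $\min s_1>l+\max_i m_i$. All of these hold automatically once any two distinct elements of $N$ differ by at least some $k>\max_i m_i$. The paper's proof is therefore a single line: take $N$ to be every $k$-th element of $M$, truncated above $l+k$. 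Your recursive scheme, if it could be made to work, would at best reproduce this sparseness by a much longer route; as written, the lemmas you invoke do not deliver the conclusions you ascribe to them.
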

  \begin{proof}
  Take $k > max\{m_1, m_2, \cdots, m_n \} $ and
  $$
  N = \{m_i : i \equiv 0 \ mod(k) \}/ \{l+k \}.
  $$
  It is not hard to see that $N$ satisfies de conclusion.
  \end{proof}

  \begin{theorem}
 \label{segundo} Let $k \in \N \setminus \{0\}$ and $\xi< \omega_1$.
 If $\suc{e}$ is a $k$-asymptotic model generated by a plegma block $\mathbb{N}^{[k]} \times \N$-submatrix of the space generated by $\suc{f} \in \MA{\xi}{X}$, then $\suc{e} \in \MA{\xi + k}{X}$.
 \end{theorem}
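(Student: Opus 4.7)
The plan is to extend the proof of Theorem~\ref{primero}(2) from the spreading-model case (uniformity $\xi+1$) to the $k$-asymptotic-model case (uniformity $\xi+k$), now using the barrier $\barr{B}':=[\N]^k\oplus\barr{B}$ in place of $\N\oplus\barr{B}$. Fix a $\xi$-uniform spreading barrier $\barr{B}$ and $M\in[\N]^\infty$ so that $(x^i_s)_{s\in\barr{B}\upharpoonright_M,\,i\in\N}$ generates $\suc{f}$ as a $\xi$-asymptotic model of $X$, together with the plegma block $[\N]^k\times\N$-submatrix $(y^i_t)_{t\in[\N]^k\upharpoonright_N,\,i\in\N}$ generating $\suc{e}$ as a $k$-asymptotic model of $F$, and expand each block as $y^i_t=\sum_{j\in L^i_t}c^{i,t}_j f_j$. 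By induction on $k$ using $(\barr{B}')_{\{n\}}=[\N/n]^{k-1}\oplus\barr{B}$ and the recursive description of uniformity, $\barr{B}'$ is a spreading barrier of uniformity $\xi+k$.

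By a diagonal Ramsey argument applied to the colorings $t\mapsto |L^i_t|$, I pass to a subsequence on which $|L^i_t|=:m_i$ depends only on $i$. For $u=t\cup s\in\barr{B}'$ let $(r^i_j)_{j=1}^{m_i}$ denote the $\plegman{\barr{B}}{m_i}$-decomposition of $s$, write $L^i_t=\{\ell^i_1<\cdots<\ell^i_{m_i}\}$ and $\sigma(i,j):=m_1+\cdots+m_{i-1}+j$, and define
\[
z^i_u\;:=\;\sum_{j=1}^{m_i}c^{i,t}_{\ell^i_j}\,x^{\sigma(i,j)}_{r^i_j}\;\in\;X.
\]
Lemma~\ref{plegmasuma}, applied for each finite tuple $(m_1,\ldots,m_n)$, lets me further restrict $\barr{B}'$ so that for every plegma $(u_i)_{i=1}^n$ with $u_i=t_i\cup s_i$ the concatenation $(r^1_j)_{j=1}^{m_1}{}^\frown\cdots{}^\frown(r^n_j)_{j=1}^{m_n}$ is a plegma of size $|L|:=\sum_i m_i$ in $\barr{B}$. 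Successively invoking (i) the $k$-asymptotic model property of $\suc{e}$ over $(y^i_t)$, (ii) the spreading ($1$-subsymmetric) property of asymptotic models in $F$, a standard fact about elements of $\MA{\xi}{X}$ from \cite{asym}, which yields $\|\sum_m d_m f_{\ell_m}\|=\|\sum_m d_m f_m\|$, and (iii) the $\xi$-asymptotic model property of $\suc{f}$ applied to the concatenated plegma $(\tilde r_m)_{m=1}^{|L|}$, one obtains
\[
\Bigl\|\textstyle\sum_i a_i e_i\Bigr\|_E\approx\Bigl\|\textstyle\sum_i a_i y^i_{t_i}\Bigr\|_F=\Bigl\|\textstyle\sum_m d_m f_m\Bigr\|_F\approx\Bigl\|\textstyle\sum_m d_m x^m_{\tilde r_m}\Bigr\|_X=\Bigl\|\textstyle\sum_i a_i z^i_{u_i}\Bigr\|_X,
\]
where $d_m=a_ic^{i,t_i}_{\ell^i_j}$ for $m=\sigma(i,j)$; this is precisely the $(\xi+k)$-asymptotic model approximation. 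A final application of Theorem~\ref{exisasym} to the normalized matrix $(z^i_u/\|z^i_u\|)$ identifies the limit asymptotic model with $\suc{e}$, using the single-term version of the above chain to see that $\|z^i_u\|_X\approx 1$.

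The main obstacle is orchestrating all these passages coherently for every $n$ simultaneously, producing a single submatrix of $(z^i_u)$ witnessing membership in $\MA{\xi+k}{X}$ with a common error $\epsilon_m\searrow 0$; this is handled by a standard diagonal argument combining the $m_i$-stabilization, the restrictions of Lemma~\ref{plegmasuma} for each finite tuple of sizes, and a final application of Theorem~\ref{exisasym}. The subsymmetry step (ii) is crucial because the row indexing of $(x^i_s)$ is sequential whereas the block supports $L^i_t$ consist of arbitrary natural numbers; alternatively one could mimic the ``padding with zeros'' device from the proof of Theorem~\ref{primero}(2), applying the $\xi$-asymptotic model property to a plegma of size $\max L$ with nonzero coefficients only at positions $\ell^i_j\in L$, which sidesteps subsymmetry at the cost of the more intricate extension of the concatenated plegma $(r^i_j)$ to a full plegma of size $\max L$ in $\barr{B}$.
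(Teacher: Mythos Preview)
Your primary approach has a genuine gap at step (ii): asymptotic models are \emph{not} $1$-subsymmetric in general. The paper itself remarks that every basic sequence of $X$ lies in $\MA{\xi}{X}$, so $\suc{f}$ may be an arbitrary basic sequence, and the identity $\|\sum_m d_m f_{\ell_m}\|=\|\sum_m d_m f_m\|$ you invoke simply fails. This is precisely the distinction between spreading models and asymptotic models, and there is no ``standard fact from \cite{asym}'' to the effect you claim. A secondary problem is your Ramsey stabilization $|L^i_t|=m_i$: the coloring $t\mapsto |L^i_t|$ takes values in $\N$, not in a finite set, and block supports are typically unbounded, so no monochromatic infinite set need exist; without constant $m_i$ your row index $\sigma(i,j)=m_1+\cdots+m_{i-1}+j$ is not well-defined as a function of $(i,u)$ alone, and the matrix $(z^i_u)$ is ill-formed.

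The fix is exactly the ``padding with zeros'' alternative you mention at the end, and this is what the paper does. One keeps the original block-support index as the row index of $x$: define $z^j_{t^\frown s}$ (up to normalization) as $\sum_{i\in F_{(t,j)}}a^{(t,j)}_i\, x^{i}_{r_i}$, where $(r_i)_{i\le \max F_{(t,j)}}$ is the $\plegman{\barr{B}}{\max F_{(t,j)}}$-decomposition of $s$. No stabilization of support sizes is needed and no subsymmetry is invoked; instead, for a plegma $(t_i^\frown s_i)_{i=1}^n$ one uses the plegma-block hypothesis to get $F_{(t_1,1)}<\cdots<F_{(t_n,n)}$, writes $\|\sum_i b_i y^i_{t_i}\|_F=\|\sum_{j\le\max F_{(t_n,n)}} d_j f_j\|_F$ with $d_j=0$ off the supports, and compares this directly to $\|\sum_j d_j x^j_{r_j}\|_X$ via the $\xi$-asymptotic model property applied to a single plegma of length $\max F_{(t_n,n)}$. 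Lemma~\ref{plegmasuma} supplies the required long plegma in $\barr{B}$ (the ``more intricate extension'' you flagged), after which one cuts back to the relevant indices. So your sketch becomes correct once you abandon the re-indexing by $\sigma$ and actually carry out the alternative you already identified.
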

 \begin{proof} Let $\matb{x}{\barr{B}}$ be a plegma block matrix generating $\suc{f}$ as an $\xi$-asymptotic model and $(y_t^{j} )_{t \in [\N]^{k}, j \in \N}$ a plegma block $\mathbb{[N]}^{k} \times \N$-matrix of $\suc{f}$ generating $\suc{e}$ as a $k$-asymptotic model, since it is a plegma block matrix each entry can be expressed as $y_t^{j} = \summaset{F_{(t,j)}}{a^{(t,j)}}{f}$, where $a^{(t,j)}_i \in \mathbb{R}$ for each $t \in [\N]^{k} $ and for each $j \in \N$. Now, we shall define a $ ([\mathbb{N}]^{k} \oplus \barr{B}) \times \N$-matrix on $X$ which will contain a submatrix generating $\suc{e}$ as an asymptotic model. To do this, we first take for every $t ^\frown s \in \mathbb{[N]}^{k} \oplus \barr{B}$
$$
 z^{j}_{t ^\frown s} = \frac{\summaaset{F_{(t,j)}}{a^{(t,j)}}{x}{r}}{\norm{\summaaset{F_{(t,j)}}{a^{(t,j)}}{x}{r}}} \   \text{if} \hspace{ .2cm} \max{F_{(t,j)}} \leq s \hspace{.2 cm}
 $$
 and $\hspace{.2cm}  (r_i)_{i \leq \max{F_{(t,j)}}} \in \plegman{\barr{B}}{\max{F_{(t,j)}}}$.
 $$
 z^{j}_{t ^\frown s}  =  \text{any fixed element of} \ S(X) \hspace{ .2cm} \text{otherwise},
 $$
 where $(r_i)_{i \leq max{F_{(t,j)}}}$ is the $\plegman{\barr{B}}{\max{F_{(t,j)}}}$-decomposition of $s$. By Theorem~\ref{exisasym} there exists an infinite $M \in [\N]^{\infty}$ such that the matrix restricted to $M$ converges to some seminorm. We claim that this seminormed space is actually the Banach space  $\suc{e}$. Hence, $(z^{j}_{t ^\frown s})_{t ^\frown s \in \barr{B}\upharpoonright M, j \in \N}$  generates $\suc{e}$ as a $(\xi + k)$-asymptotic model. To start  proving our claim fix $\epsilon >0 $ and $n \in \N$, and choose $(t_i)_{i=1}^{n} \in \plegman{[\N]^{k}}{n}$ so that
 $$
 \left|\norm{\summa{n}{b}{e}} - \norm{\summas{n}{b}{y^{i}}{t}}\right| < \frac{\epsilon}{2}
 $$
 for each $(b_i)_{i=1}^{n} \in [-1,1]^{n}$. Now, we apply Lemma \ref{plegmasuma} with $n$, $\barr{B}$, $M \in [\N]^{\infty}$ and the natural numbers $\max{F_{(t_1,1)}}, \max{F_{(t_2,2)}}, \cdots,$ $ \max{F_{(t_n,n)}}$ to obtain $N \in [\N]$ such that every $(s_i)_{i=1}^{n} \in \plegman{\barr{B} \upharpoonright_{N}}{n}$ satisfies:
\begin{enumerate}
 \item  The $\plegman{\barr{B}}{\max{F_{(t_i,i)}}}$-decomposition of $s_i$, $$(r_{(i,v)})_{ v \leq \max{F_{(t_i, i)}}} \in \plegman{\barr{B}}{\max{F_{(t_i,i)}}},$$ for every $i \leq n$.

 \item $(r_{(0,v)})_{ v \leq \max{F_{(t_0, 0)}}} ^\frown (r_{(1,v)})_{ v \leq \max{F_{(t_1, 1)}}} ^\frown \cdots ^\frown (r_{(n,v)})_{ v \leq \max{F_{(t_n, n)}}}$ is an element of $\plegman{\barr{B}}{\sum_{i \leq      n}\max{F_{(t_i,i)}}}$.
 \item $l < r_{(0,v)}$ for $l \in \N$.

\end{enumerate}
 Notice also that $(t_i ^{\frown} s_i)_{i=1}^{n} \in \plegman{\mathbb{[N]}^{k} \oplus \barr{B}}{n}$. Then, by taking a sufficiently large $l \in \N$, we obtain that
$$
 \left|\norm{\summa{n}{b}{e}} - \norm{\sum_{i=1}^{n} b_i z^i_{t_i ^\frown s_i} } \right| \leq
 $$
 $$
 \left|\norm{\summa{n}{b}{e}} - \norm{\summas{n}{b}{y^{i}}{t}}\right|  + \left|\norm{\summas{n}{b}{y^{i}}{t}}- \norm{\sum_{i =1}^{n} b_i z^i_{t_i ^\frown s_i} } \right|
 $$
 $$
 \leq \frac{\epsilon}{2} + \left|\norm{\sum_{i =1}^{n} b_i \sum_{j \in F_{(t_i,i)}} a_j^{(t_i,i)} f_j} - \norm{\sum_{i =1}^{n} b_i \frac{\sum_{j \in F_{(t_i,i)}} a_j^{(t_i,i)} x^{j}_{r_{(i,j)}}}{\norm{\sum_{j \in F_{(t_i,i)}} a_j^{(t_i,i)} x^{j}_{r_{(i,j)}}}}}\right|
$$
for all $(b_i)_{i=1}^{n} \in [-1,1]^{n}$. We can make the second summand smaller than $\frac{\epsilon}{2}$. Indeed, by taking $(s_i)_{i=1}^{n} \in \plegman{\barr{B}}{n}$ with $l$ large enough we can get
 $$
 \left| \norm{\sum_{j \in F_{(t_i,i)}} a_j^{(t_i,i)} x^{j}_{r_{(i,j)}}} - 1 \right| = \left| \norm{\sum_{j \in F_{(t_i,i)}} a_j^{(t_i,i)} x^{j}_{r_{(i,j)}}} - \norm{\sum_{j \in F_{(t_i,i)}} a_j^{(t_i,i)} f_j }\right| \leq \frac{\epsilon}{4n}
 $$
 for each $1 \leq i \leq n$.
 Therefore,
\begin{align*}
&\left|\norm{\sum_{i =1}^{ n} b_i \sum_{j \in F_{(t_i,i)}} a_j^{(t_i,i)} f_j} - \norm{\sum_{i =1}^{ n} b_i \frac{\sum_{j \in F_{(t_i,i)}} a_j^{(t_i,i)} x^{j}_{r_{(i,j)}}}{\norm{\sum_{j \in F_{(t_i,i)}} a_j^{(t_i,i)} x^{j}_{r_{(i,j)}}}}}\right| \\
 &\leq \left|\norm{\sum_{i =1}^{ n} b_i \sum_{j \in F_{(t_i,i)}} a_j^{(t_i,i)} f_j} -  \norm{\sum_{i =1}^{ n} b_i \sum_{j \in F_{(t_i,i)}} a_j^{(t_i,i)} x^{j}_{r_{(i,j)}}}\right| \\
 &+ \left| \norm{\sum_{i =1}^{ n} b_i \sum_{j \in F_{(t_i,i)}} a_j^{(t_i,i)} x^{j}_{r_{(i,j)}}}  -   \norm{\sum_{i =1}^{ n} b_i \frac{\sum_{j \in F_{(t_i,i)}} a_j^{(t_i,i)} x^{j}_{r_{(i,j)}}}{\norm{\sum_{j \in F_{(t_i,i)}} a_j^{(t_i,i)} x^{j}_{r_{(i,j)}}}}} \right|
 \end{align*}

 Since $(y_t^{j} )_{t \in [\N]^{k}, j \in \N}$ is a plegma block matrix, it follows that
 $$F_{(t_0, 0)} < F_{(t_1, 1)} < \cdots < F_{(t_n, n)}.$$ So, by "cutting" some of the elements in $(2)$, we consider $r$ defined as
  $$(r_{(0,v)})_{ v \leq \max{F_{(t_0, 0)}}} ^\frown (r_{(1,v)})_{\max{F_{(t_0, 0)}} v \leq \max{F_{(t_1, 1)}} } ^\frown \cdots ^\frown (r_{(n,v)})_{\max{F_{(t_{n-1}, n-1)}} v \leq \max{F_{(t_n, n)}}} $$
  which is an element of $\plegman{\barr{B}}{\max{F_{(t_n,n)}}}$. Then we conclude that the first summand of the previous inequality is bounded by
 $$
 d_{\max{F_{(t_n, n)}}}(<f_i>_{i=1}^{\max{F_{(t_n, n)}}}, \Phi(r))
 $$
 , where $<f_i>_{i=1}^{\max{F_{(t_n, n)}}}$ denotes the element of $\mathcal{N}_{\max{F_{(t_n, n)}}}$ that inherits its norm, and this can be made smaller than $\frac{\epsilon}{4}$ by applying the hypothesis of convergence from $\matb{x}{\barr{B}}$. By using this assertion, our previous inequality becomes:
 \begin{align*}
 &< \frac{\epsilon}{4} + \norm{\sum_{i =1}^{ n} b_i \sum_{j \in F_{(t_i,i)}} a_j^{(t_i,i)} x^{j}_{r_{(i,j)}} -\sum_{i =1}^{ n} b_i \frac{\sum_{j \in F_{(t_i,i)}} a_j^{(t_i,i)} x^{j}_{r_{(i,j)}}}{\norm{\sum_{j \in F_{(t_i,i)}} a_j^{(t_i,i)} x^{j}_{r_{(i,j)}}}}} \\
 &< \frac{\epsilon}{4} +  \norm{\sum_{i =1}^{ n} b_i \sum_{j \in F_{(t_i,i)}} a_j^{(t_i,i)} x^{j}_{r_{(i,j)}} [1  - \frac{ 1}{\norm{\sum_{j \in F_{(t_i,i)}} a_j^{(t_i,i)} x^{j}_{r_{(i,j)}}}}]} \\
 &< \frac{\epsilon}{4} + n  \left| \norm{\sum_{j \in F_{(t_i,i)}} a_j^{(t_i,i)} x^{j}_{r_{(i,j)}}} -1 \right| \leq \frac{\epsilon}{4} + n\frac{\epsilon}{4n} = \frac{\epsilon}{2}.
\end{align*}
For each $(b_i)_{i=1}^{n} \in [-1,1]^{n}$.
 \end{proof}

It is shown in \cite[Th. 68]{akt2} that there exists a space $X$ without $\ell_p$ as a spreading model of any level. So, we can conclude from the Theorem~\ref{primero}  that $X$ does not have $\ell_p$ as an asymptotic model of any order. Moreover,  Theorem~\ref{segundo} implies that $X$ does not admit a finite asymptotic block chain $(e^{i+1}_j)_{j \in \mathbb{N}} \in
 \MA{}{\overline{(e^{i}_j)_{j \in \mathbb{N}}}}$, for every $i \leq n$, such that   $(e^{n}_j)_{j \in \mathbb{N}} \cong \ell_p$. Thus
Question \ref{6.5} has a negative answer.

\medskip

It is well know that, in a Banach space with Schauder basis, every spreading model generated by a weakly null sequence can also be generated by a block sequence. This situation may be extended to the asymptotic case as it is shown in the next theorem, but to have  this done we first recall the next definition from \cite{asym}:


\begin{definition}  A matrix $(x^{n}_i)_{i \in \N, n \in \N}$ is called a {\it weakly null matrix} if:
$$
(x^{n}_{i})_{i \in \N}
$$
is a weakly null sequence, for every $n \in \N$.
\end{definition}


\begin{theorem}\label{ultimo}
Let $(x^{n}_i)_{i \in \N, n \in \N}$ be a weakly null matrix generating $(f_i)_{i \in \N}$ as an asymptotic model. Then there exists $(y^n_i)_{i \in \N, n \in N}$ a plegma block matrix that generates $(f_i)_{i \in \N}$ as an asymptotic model.
\end{theorem}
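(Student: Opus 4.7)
The plan is a diagonal Bessaga--Pelczy\'nski--type extraction: from each row $(x^{n}_{i})_{i}$ I select vectors whose supports lie in a common, increasing sequence of disjoint windows of the Schauder basis $(e_{k})_{k}$, normalize them, and check that the asymptotic model is preserved. Write $P_{(a,b]}$ for the projection onto $\overline{\mathrm{span}}\{e_{k}:a<k\leq b\}$ induced by the basis; if $C$ is the basis constant then $\|P_{(a,b]}\|\leq 2C$. As a preliminary step I pass to a submatrix of $(x^{n}_{i})$ so that the asymptotic model is realised in stabilised form (via the pre-asymptotic statement recalled after the definition of asymptotic model in the introduction): there is $\eta_{j}\searrow 0$ such that for every strictly increasing $s_{1}<\cdots<s_{n}$ with $s_{1}\geq n$ and every $(a_{i})\in[-1,1]^{n}$,
$$\bigl|\,\|\sum_{i=1}^{n} a_{i} x^{i}_{s_{i}}\|-\|\sum_{i=1}^{n} a_{i} f_{i}\|\,\bigr| < \eta_{s_{1}}.$$

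I then fix $\delta_{j}\leq \eta_{j}/(8Cj)$ (decreasing) and build inductively $M_{1}<M_{2}<\cdots$ in $\N$, indices $p(n,j)$ for $1\leq n\leq j$, and vectors $y^{n}_{j}$, processing the pairs $(j,n)$ in lexicographic order with $j$ as primary key and always choosing $p(n,j)$ strictly larger than every previously chosen index and $\geq j$. At step $j$, for each $n\leq j$, the weak nullity of $(x^{n}_{i})_{i}$ yields $\|P_{\leq M_{j}} x^{n}_{i}\|\to 0$, so $p(n,j)$ can be picked with $\|P_{\leq M_{j}} x^{n}_{p(n,j)}\|<\delta_{j}$ while respecting the monotonicity; then $M_{j+1}>M_{j}$ is chosen large enough that $\|P_{>M_{j+1}} x^{n}_{p(n,j)}\|<\delta_{j}$ for every $n\leq j$ (finitely many tails of basis expansions to control). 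Define
$$y^{n}_{j}=\frac{P_{(M_{j},M_{j+1}]} x^{n}_{p(n,j)}}{\|P_{(M_{j},M_{j+1}]} x^{n}_{p(n,j)}\|}\quad(n\leq j),$$
and for $n>j$ let $y^{n}_{j}$ be any fixed unit block vector supported in $(M_{j},M_{j+1}]$. Each $y^{n}_{j}$ is a normalized block of $(e_{k})$ whose support lies in a window depending only on $j$.

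For the plegma block condition, take $m=n$: a plegma $(\{t_{i}\})_{i=1}^{n}\in\plegman{\N/n}{n}$ satisfies $n<t_{1}<\cdots<t_{n}$, so $t_{i}>i$ and $y^{i}_{t_{i}}$ is the block built at step $j=t_{i}$, with support in $(M_{t_{i}},M_{t_{i}+1}]$; these windows are disjoint and increasing, so $(y^{i}_{t_{i}})_{i=1}^{n}$ is a block sequence of $(e_{k})$. For the asymptotic model, a short triangle inequality using the two tail bounds and $\bigl|\|P_{(M_{j},M_{j+1}]} x^{n}_{p(n,j)}\|-1\bigr|\leq 2\delta_{j}$ gives $\|y^{n}_{j}-x^{n}_{p(n,j)}\|\leq K\delta_{j}$ for an absolute constant $K$. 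Because of the lex choice, any plegma $s_{1}<\cdots<s_{n}$ with $s_{1}\geq n$ yields $p(1,s_{1})<\cdots<p(n,s_{n})$, which is again plegma in $\N$ with $p(1,s_{1})\geq s_{1}\geq n$, so the stabilised estimate gives
$$\bigl|\,\|\sum a_{i} x^{i}_{p(i,s_{i})}\|-\|\sum a_{i} f_{i}\|\,\bigr| < \eta_{p(1,s_{1})}\leq \eta_{s_{1}}.$$
Combining with $\sum_{i=1}^{n} K\delta_{s_{i}}\leq Kn\delta_{s_{1}}\leq\eta_{s_{1}}$ (using $n\leq s_{1}$ and the choice of $\delta_{j}$) yields total error $<2\eta_{s_{1}}$; passing to a sparse subsequence $(k_{i})\subseteq\N$ then matches any prescribed $\epsilon_{j}\searrow 0$ and verifies the asymptotic model condition for $(y^{n}_{j})$.

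The main technical point is book-keeping: arranging the perturbations $\delta_{j}$, the cut-offs $M_{j}$, and the indices $p(n,j)$ so simultaneously that the $y^{n}_{j}$ are genuine unit block vectors, the map $(i,s_{i})\mapsto p(i,s_{i})$ automatically sends plegma inputs to plegma outputs, and the accumulated perturbation stays dominated by $\eta_{s_{1}}$. The first two are handled by the lexicographic choice together with the two-sided support control afforded by weak nullity (left tail) and basis convergence (right tail); the third is why the factor $1/(Cj)$ is inserted in $\delta_{j}$.
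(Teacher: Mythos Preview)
Your argument is correct and rests on the same underlying idea as the paper's proof: a gliding--hump (Bessaga--Pe\l czy\'nski) perturbation replacing each weakly null row by nearby block vectors, followed by a triangle--inequality estimate showing the asymptotic model is unchanged. The paper carries this out row by row, choosing for each $n$ a block sequence $(y^{n}_{i})_{i}$ with $\|y^{n}_{i}-x^{n}_{k^{n}_{i}}\|\leq 2^{-(n+i+1)}$ along nested index sequences $(k^{n}_{i})_{i}$, and then bounds $\bigl|\|\sum a_{i}f_{i}\|-\|\sum a_{i}y^{i}_{s(i)}\|\bigr|$ by the corresponding $x$--error plus a geometric tail. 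Your organisation is column by column instead: for each $j$ you place all $y^{n}_{j}$ with $n\leq j$ inside a single window $(M_{j},M_{j+1}]$. This buys you a cleaner verification of the plegma block condition (supports depend only on $j$, so any increasing $t_{1}<\cdots<t_{n}$ automatically gives a block sequence), a point the paper leaves implicit. Conversely, the paper's nested--subsequence bookkeeping is lighter, since it avoids tracking the explicit cut--offs $M_{j}$ and the normalisation step. Both routes are standard implementations of the same perturbation principle; your version is more explicit about the plegma block structure, which is appropriate here.
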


\begin{proof}
  By a simple use of the ``gliding hump'' argument, we can recursively choose block subsequences $(y^n_i)_{i \in \N}$ and $(k^{n+1}_i)_{i \in \N}$ (each sequence is a subsequence of the previous one) so that
  $$
  \norm{y^{n+1}_i - x^{n+1}_{k^{n+1}_i}} \leq \frac{1}{2^{(n+1)+i+1}},
   $$
for each $n \in  \mathbb{N}$.   Let us see that $(y^n_i)_{i \in \N, n \in \N}$ certainly generates $(f_i)_{i \in \N}$ as an asymptotic model. Indeed, this will follow from the next inequality:
  \begin{align*}
  &\left| \norm{\sum_{i =1}^{l} a_i f_i} - \norm{\sum_{i =1}^{l} a_i y^i_{s(i)}} \right| \\
  &\leq \left| \norm{\sum_{i =1}^{l} a_i f_i} - \norm{\sum_{i =1}^{l} a_i x^i_{k_{s(i)}}} \right|  + \left| \norm{\sum_{i =1}^{l} a_i x^i_{k_{s(i)}}}  - \norm{\sum_{i =1}^{l} a_i y^i_{s(i)}} \right| \\
  &\leq \left| \norm{\sum_{i =1}^{l} a_i f_i} - \norm{\sum_{i =1}^{l} a_i x^i_{k_{s(i)}}} \right| + \frac{1}{2^{s(1)}},
  \end{align*}
for a given finite subset $s = \{s(1), s(2), \cdots , s(l) \}$ of natural numbers  and for every $(a_i)_{i=1}^l \in [-1,1]$. Notice that the first summand can be made arbitrarily small by using the assumption on the matrix $(x^{n}_i)_{i \in \N, n \in \N}$. Thus, $(y^n_i)_{i \in \N, n \in \N}$ generates $(f_i)_{i \in \N}$ as an asymptotic model.
\end{proof}

 It is easy to see from Theorems $\ref{segundo}$ and $\ref{ultimo}$ that any finite chain of asymptotic models generated by weakly null matrices (inside their respective Banach spaces) can be replaced by a chain consisting of block matrices. This remark  leads us to the next corollary.

\begin{corollary}
There is a Banach space $X$ such that no finite chain of weakly generated asymptotic models $X_1, X_2, \cdots, X_n$ starting with $X$ has a an ending space either $l_p$ for $p \in [1, \infty)$ or $c_0$.
\end{corollary}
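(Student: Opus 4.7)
The plan is to take the Banach space $X$ exhibited in [akt2, Theorem 68] (which admits no $\ell_p$ as a spreading model of any countable order) and to run a short inductive argument along the chain, combining Theorems \ref{primero}, \ref{segundo}, and \ref{ultimo}.

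First I would observe that $X$ also admits no $\ell_p$ (for $p \in [1, \infty)$) nor $c_0$ as an \emph{asymptotic} model of any countable order $\xi$. Indeed, since the unit vector bases of $\ell_p$ and $c_0$ are subsymmetric, each of these spaces is a spreading model of itself. Hence if one of them were to lie in $\MA{\xi}{X}$ for some $\xi < \omega_1$, Theorem \ref{primero}(2) would place it in $\SP{\xi+1}{X}$, contradicting the defining property of $X$.

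Next, given any finite chain $X = X_0, X_1, \ldots, X_n$ of weakly generated asymptotic models, I would prove by induction on $i \geq 1$ that $X_i \in \MA{i}{X}$. The base case $i = 1$ is immediate by definition of $\MA{1}{X}$. For the inductive step, assume $X_i \in \MA{\xi_i}{X}$ with $\xi_i \leq i$, and write $\suc{f}$ for the Schauder basis of $X_i$. By hypothesis $X_{i+1}$ is generated as an asymptotic model by a weakly null matrix with entries in $X_i$; Theorem \ref{ultimo} replaces this matrix by a plegma block matrix of $\suc{f}$ that still generates $X_{i+1}$. Applying Theorem \ref{segundo} with $k = 1$ (so that $[\N]^{k}$ reduces to $\N$ and a $k$-asymptotic model is an ordinary asymptotic model) then places $X_{i+1}$ in $\MA{\xi_i + 1}{X} \subseteq \MA{i+1}{X}$.

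Combining the two steps, $X_n \in \MA{n}{X}$, so $X_n$ cannot be isomorphic to $\ell_p$ or $c_0$ by the first observation, which gives the desired conclusion. The main (and essentially only) obstacle is the bookkeeping in the inductive step: one needs to verify that Theorem \ref{ultimo} genuinely produces a \emph{block} matrix with respect to the basis of the intermediate space $X_i$ (rather than of $X$), and that such a plegma block matrix qualifies as a plegma block $[\N]^{1} \times \N$-submatrix of the span of $\suc{f}$ in the sense demanded by Theorem \ref{segundo}. Both reduce to unpacking the definitions of ``weakly generated'', ``plegma block matrix'', and the trivial identification $[\N]^{1} \cong \N$.
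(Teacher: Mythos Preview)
Your proposal is correct and follows essentially the same route as the paper: take the space $X$ from \cite[Th.~68]{akt2}, use Theorem~\ref{ultimo} to replace each weakly null matrix in the chain by a plegma block matrix, use Theorem~\ref{segundo} (with $k=1$) inductively to place $X_n$ in $\MA{n}{X}$, and then invoke Theorem~\ref{primero}(2) together with the subsymmetry of the $\ell_p$ and $c_0$ bases to rule these out. The paper merely states this as an ``easy to see'' remark before the corollary, while you have spelled out the induction and the role of subsymmetry more carefully; the content is the same.
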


 \medskip

We finish the paper by listing some questions

 \medskip

As in the spreading case we also have that $AM_{\xi}(X) \subseteq AM_{\eta}(X)$ provided that $\xi < \eta  < \omega_1$.

\begin{question}
For distinct $\xi < \eta  < \omega_1$  does there exist a Banach space $X$ such that
$$
AM_{\xi}(X) \neq AM_{\eta}(X) \ ?
$$
\end{question}


 For a $\mathcal{F} \subseteq FIN$ and $t \in FIN$ we denote by $\mathcal{F}_t = \{ s \in FIN : t < s \ \text{and} \ t^{\frown} s \in \mathcal{F}\}$. If $B \subseteq FIN$ is a uniform barrier, then it is known that for every $t \in FIN$ such that $\mathcal{B}_{t}$ has uniformity greater than $1$ it is possible to find $s \in FIN$ such that $\mathcal{B}_{t^{\frown}s}$ has uniformity exactly one.

 \medskip

In the paper \cite{aost} it was proven that for any countable set of spreading models has an upper bound in the pre-order of domination. This was generalized in \cite{st} by establishing this same property for $SM^{\xi}_w(X)$, where $SM^{\xi}_w(X)$ denotes the family of all $\xi$-order spreading models of a Banach space $X$ generated by subordinated weakly null $\mathcal{F}$-sequences. Furthermore, if this set contains an increasing sequence of length $\omega$, then it contains an increasing sequence of length $\omega_1$. In view of these facts, it is natural to consider
the following class of weakly null matrices.

\begin{definition} Let $\mathcal{B}$ be a uniform barrier. A $\mathcal{B} \times \N$-matrix $(x^{n}_{s})_{s \in \mathcal{B}, n \in \N}$ is called {\it weakly null} if for every $n \in \N$ and for each $s \in FIN$ such that $\barr{B}_{s}$ is of uniformity $1$
$$
(x^{n}_{s ^{\frown} i})_{i \in \N / s}
$$
is weakly null matrix.
\end{definition}

Then, we may ask if the set $AM^{\xi}_w(X)$ of asymptotic models generated by weakly null matrices is also a semi-lattice with the pre-partial order of domination.

\medskip

 Next, we pose the questions  that are the higher asymptotic version of Problems $6.2$, $6.3$ and $6.6$ from the article \cite{asym}, respectively.

\medskip

In the following questions,  fix $\xi < \eta  < \omega_1$ .

\medskip

\begin{question} Let  $\barr{B}$ be a barrier of uniformity $\xi$ and let  $X$ be a Banach space with a Schauder basis. If $X$ admits a unique, isometric, asymptotic model of level $\xi$ for all normalized block basic weakly null $\barr{B} \times \N$-matrices, must $X$ contain an isomorphic copy of that space \ ?
\end{question}

 It is known that  if $X$ admits a unique, isometric, asymptotic model of level $\xi$ for all normalized block basic weakly null $\barr{B} \times \N$-matrices this unique asymptotic model must be $c_0$ or $\ell_p$ for some $1 \leq p < \infty$.

\begin{question}
Is it possible to stabilize the $\xi$-asymptotic structure of the space $X$  \ ?
\end{question}

\begin{question}
Can the distortion of $\ell_p$ $(1 \leq p < \infty)$ be extended to its $\xi$-asymptotic structure ? That is, are there a $K>1$ and an equivalent norm $||| \cdot |||$ on $\ell_p$ such that none
$\xi$-asymptotic  model of this renorm is $K$-equivalent to the unit vector basis of $\ell_p$ \ ?
\end{question}



\def\polhk#1{\setbox0=\hbox{#1}{\ooalign{\hidewidth
  \lower1.5ex\hbox{`}\hidewidth\crcr\unhbox0}}}
  \def\polhk#1{\setbox0=\hbox{#1}{\ooalign{\hidewidth
  \lower1.5ex\hbox{`}\hidewidth\crcr\unhbox0}}} \def\cprime{$'$}
  \def\cprime{$'$}

\end{document}